\documentclass[12pt]{article}
\usepackage{amsmath, amssymb}
\usepackage{geometry}
\usepackage{setspace}
\usepackage{hyperref}
\usepackage{graphicx}
\usepackage{times}
\usepackage{xcolor}
\newtheorem{lemma}{Lemma}

\newtheorem{prop}{Proposition}

\usepackage[numbers]{natbib}
\usepackage{subfigure}

\newenvironment{proof}{\paragraph{Proof:}}{\hfill$\square$}
\title{Advance versus Spot Selling in Queueing Systems: the Effect of Uncertainty Sources}
\author{
  Yumeng Liu$^{a}$ \and Dongyuan Zhan$^{b,}$\thanks{Corresponding author. Email: \texttt{dzhan@ustc.edu.cn}}  
}

\begin{document}
\date{}
\maketitle

\begin{center}
\textit{
$^a$ International Institute of Finance, School of Management, University of Science and Technology of China, Hefei 230026, China \\ \texttt{ymliu@ustc.edu.cn} \\
$^b$ School of Management, Faculty of Business for Science and Technology, University of Science and Technology of China, Hefei 230026, China \\
\texttt{dzhan@ustc.edu.cn}
}
\end{center}

\bigskip


\begin{abstract}
We develop an unobservable M/M/1 queueing model to analyze advance selling and spot selling under uncertainty. Customers are heterogeneous in waiting costs, drawn from a uniform distribution. We disentangle two fundamental sources of uncertainty: customer valuation and service capacity. We derive the respective revenue functions for both advance and spot selling strategies and compare their performance. Our analysis reveals that advance selling is more profitable when customer valuations are uncertain. Conversely, spot selling emerges as the more profitable strategy when uncertainty originates from service capacity. When both sources are present, our numerical experiments suggest that the preferred regime depends on the relative dispersion of the two uncertainties: advance selling tends to be favored when valuation uncertainty is sufficiently pronounced relative to capacity uncertainty, whereas spot selling tends to be favored when capacity uncertainty is more pronounced.  These results highlight how the origin and magnitude of uncertainty shape the optimal selling strategy in service queues and offer practical guidance for platforms and service providers deciding between advance and spot selling. 
\end{abstract}
\textbf{Keywords:} Queueing game; Advance selling; Spot selling; Value uncertainty; Capacity uncertainty

\noindent \textbf{Mathematics Subject Classification:} 60K30 

\section{Introduction}
Advance selling, a strategy in which firms offer products or services for purchase ahead of the consumption period, is widely adopted across various service industries. Sectors such as theme parks, gyms, and museums leverage this approach to proactively manage demand and secure revenue streams. In these contexts, customers often face congestion and waiting times, which decrease their overall utility. Additionally, customers may encounter significant uncertainty regarding factors that can profoundly impact their service experience and utility. This uncertainty stems primarily from two key sources: fluctuations in customer valuations of the service and variability in the service capacity provided. 

In the \emph{uncertain valuation} scenario, customers' valuation of the service is not fixed beforehand. For example, the value derived from a visit to a theme park can vary significantly depending on external factors such as weather conditions. A sunny day might result in a high valuation, while a rainy day could lead to a lower valuation. Consequently, in advance selling, each customer faces uncertainty about their eventual valuation of the service. Another critical source of uncertainty is \emph{uncertain capacity}. For instance, theme parks may need to temporarily close certain facilities due to adverse weather conditions. Also, in the expanding gig economy, service capacity can be inherently stochastic, depending on the number of servers or volunteers available. As a result, the rate at which services are delivered can fluctuate significantly. We model capacity uncertainty through an uncertain service rate in an M/M/1 queue. This should be interpreted as a reduced-form representation of uncertainty in the effective processing speed.  Although some motivating examples, such as facility closures or staffing changes, may be more naturally captured by multi-server queues, the single-server stochastic-service-rate model provides a tractable benchmark for isolating the effect of supply-side uncertainty. 

In contrast, under spot selling, both types of uncertainty are resolved at the time of purchase, as customers make their decisions after observing their realized valuation and/or the available service capacity. If the valuation or the capacity is high, more customers purchase; otherwise, fewer customers purchase. 

When the seller must commit to a single price, the optimal selling mechanism is far from self-evident. Advance selling locks customers into the bad scenario (e.g., low valuations or poor service conditions), but it also restricts the ability to attract additional customers in the good scenario. This trade-off underscores the complexity of choosing the right pricing strategy. We endeavor to analyze queueing systems in which customers experience waiting costs and make economically rational purchase decisions based on their expectations of waiting times and service value. Specifically, we formulate and rigorously compare two distinct pricing strategies---advance selling and spot selling---under the aforementioned uncertainty regimes. 

Our main findings are simple and sharp: (i) under valuation uncertainty only, advance selling weakly dominates spot selling; (ii) under capacity uncertainty only, spot selling weakly dominates advance selling; (iii) when both are present, our numerical experiments suggest that the preferred regime depends on the relative dispersion of the two uncertainties. These key insights underscore the importance for service providers to carefully assess the nature and source of the uncertainty affecting their operations when deciding between advance and spot selling mechanisms.


\subsection*{Related Literature}

Our work connects two large literatures: queueing games and advance selling. On the queueing game side, the classic observable-queue model of \cite{Naor1969} and its unobservable counterpart in \cite{Edelson1975} launched the study of customers’ strategic join/balk behavior. Comprehensive treatments appear in \cite{HassinHaviv2003} and \cite{Hassin2016}. Building on these foundations, a substantial stream examines pricing and priority design in queues; see \cite{MendelsonWhang1990} on incentive-compatible priority pricing, \cite{AfecheMendelson2004} on priority auctions and pricing with generalized delay costs, and \cite{Afeche2013} on price/lead-time menu screening where ``strategic delay'' can be revenue-optimal. A complementary stream studies information disclosure and announcements in service systems, e.g., delay information in contact centers \cite{ArmonyShimkinWhitt2009} and the performance implications of partial vs.\ full delay information \cite{GuoZipkin2007}.  More recently, Bayesian persuasion has been adopted to design queue-length disclosure under uncertain service quality \cite{Guo2022, Guo2023signaling}. 

On the advance selling side, the marketing literature emphasizes valuation uncertainty as the economic driver of preorders and advance purchases. The seminal analysis of \cite{XieShugan2001} (and related work by \cite{ShuganXie2005}) shows when and how advance selling improves profit by separating purchase and consumption. \cite{FayXie2010} compares advance selling with probabilistic selling. In operations, advance selling and preorders are used to extract advance demand information and price discriminate among strategic consumers; see \cite{LiZhang2013} and follow-ons such as \cite{Wei2018}. Mechanisms adjacent to advance selling include binding reservations \cite{OsadchiyVulcano2010} and advance reservations with priority or preemption \cite{ChamberlainSimhonStarobinski2021}, both highlighting strategic timing and rationing effects.

Our paper sits at the intersection: we embed advance vs.\ spot selling in an unobservable $M/M/1$ queue with endogenous waiting costs, and compare revenues across distinct uncertainty sources (valuation vs.\ capacity). Relative to \cite{XieShugan2001}, who abstract from queueing delays, we explicitly model waiting costs and strategic waiting; compared with price/lead-time menu and priority-pricing papers (e.g., \cite{MendelsonWhang1990,Afeche2013}) we focus on single-price mechanisms that map cleanly to advance vs.\ spot regimes. Finally, while \cite{Wang2024} analyzes advance selling and service upgrades in a priority queue under demand uncertainty, we use a simpler single-server model without priority to isolate how the \emph{source} of uncertainty—valuation or capacity—shifts the revenue ranking between advance and spot selling.

\section{Model and Analysis}

We consider a service system modeled as an unobservable M/M/1 queue. Potential customers arrive according to a Poisson process with rate $\lambda$. They have homogeneous service valuation $v$, and heterogeneous waiting costs $c$, uniformly distributed on the interval $[0,1]$. The expected utility of purchasing is $v-p-cW$, where $p$ is the price and $W$ is the expected sojourn time. 

The seller aims to maximize revenue by setting a single price $p$. Each customer decides to purchase the service if their valuation exceeds the sum of the price and the waiting cost incurred. For a given price \(p\), we focus on symmetric cutoff equilibria. A cutoff equilibrium is a threshold \(T\in[0,1]\) such that customers with waiting-cost type \(c\le T\) join and customers with \(c>T\) balk. Given a conjectured cutoff $T$, the effective arrival rate is \(\lambda T\), and all joining customers face the expected sojourn time \(W(T)=1/(\mu-\lambda T)\) as long as $\mu>\lambda T$. Since the utility \(v-p-cW(T)\) is decreasing in \(c\), any best response has a cutoff form. The marginal customer \(c=T\) is indifferent, which gives
\begin{equation}
v - p - T\,W(T)=0, 
\label{eq:adv_val}
\end{equation}
where $v$ is the service valuation, assumed to be homogeneous across customers in this benchmark. For a given price $p$, we can solve equation (\ref{eq:adv_val}) for the equilibrium cutoff:
\begin{equation} \label{eq:T}
    T(p,v) := \frac{\mu(v-p)}{\lambda(v-p)+1}.
\end{equation}

Throughout the paper, we focus on service systems with scarce capacity. Specifically, we assume $\lambda \ge \mu$. This assumption is natural for congested service systems in which the potential demand is larger than the available service capacity, and only a fraction of potential customers can be served. In such settings, customer waiting costs play an important role in regulating demand: only customers with sufficiently low waiting costs choose to join the system, while others balk. 
Because \(\lambda\ge \mu\), this cutoff satisfies $T(p,v,\mu)<\frac{\mu}{\lambda}\le 1$, which is an interior point. The expected revenue rate for a given valuation $v$ and price $p<v$ is then:
\begin{equation}
R_p(p, v)= \lambda\,p\,T(p,v).
\label{eq:RevD}
\end{equation}
Since $R_p$ is concave in $p$, we can find the optimal price by solving the first-order condition $\frac{dR_p(p)}{dp} = 0$. This yields the optimal price:
\begin{equation} \label{OptP}
p^*(v) = \frac{1 + \lambda v - \sqrt{1 + \lambda v}}{\lambda} < v.
\end{equation}
Substituting $p^*(v)$ back into the cutoff and revenue function, we obtain the optimal cutoff and the maximum revenue:
\[
T(p^*(v),v) = \frac{\mu}{\lambda}\bigg(1-\frac{1}{\sqrt{1+\lambda v}}\bigg), \quad R^*(v) = \frac{\mu(\sqrt{1 + \lambda v}-1)^2}{\lambda}. 
\]

\textbf{Remark:}  
The uniform distribution is a normalized benchmark for heterogeneous delay sensitivity. 
This uniform assumption implies that the cutoff type \(T\) is also the fraction of customers who join, so the effective arrival rate is \(\lambda T\). This feature yields closed-form equilibrium and revenue expressions. Under a general waiting-cost distribution \(G\), the effective arrival rate would be \(\lambda G(T)\), and the equilibrium condition would become
\[
v-p-\frac{T}{\mu-\lambda G(T)}=0.
\]
This formulation can be used for numerical analysis, but closed-form dominance comparisons are generally not available. Consistent with several prior works in the literature (e.g., \cite{Zhang2007,Gilland2009,Yu2012}), we therefore adopt $c\sim U[0,1]$ to preserve analytical tractability. 


We call the above the base model. In the subsequent sections, we introduce uncertainty into the model, considering uncertainty in valuation and service capacity respectively.  For each source of uncertainty, we will analyze both advance selling and spot selling strategies and compare their profitability.  Under both strategies, the seller sets \emph{a single price} and cannot change prices based on the state. The key distinction lies in the timing of customer purchase decisions relative to the resolution of uncertainty.

\begin{itemize}
 \item \textbf{Advance Selling:} A single price $p_a$ is set before the uncertainty is resolved. Customers decide whether to purchase \emph{before} the realized state of uncertainty.
\item \textbf{Spot Selling:} A single price $p_s$ is set before the uncertainty is resolved. Customers make their purchase decisions \emph{after} they observe the realized state. 
\end{itemize}

\subsection{Uncertain Valuation}
We first consider the case where customer valuation is uncertain and follows a two-point distribution:
\begin{equation} \label{DisV}
v = \begin{cases}
v_h, & \text{with probability } \alpha,\\[1mm]
v_l, & \text{with probability } 1-\alpha,
\end{cases}
\end{equation}
where $v_h>v_l>0$, $0<\alpha<1$. Let $\bar{v}=\alpha v_h + (1-\alpha) v_l$ denote the expected service valuation. 

\paragraph{Spot Selling.}
Under spot selling, the seller sets a spot price $p_s$, and the valuation uncertainty is resolved \emph{before} customers make their purchase decisions.  Based on  (\ref{eq:T}), when customers observe a high valuation $v_h$, the marginal customer type is $[T(p_s,v_h)]^+$, where $[x]^+:=\max\{x,0\}$; when customers observe a low valuation $v_l$, the marginal customer type is $[T(p_s, v_l)]^+$, illustrated in Figure~\ref{fig:SpotS}. The expected revenue rate under spot selling is given by:
\begin{equation*}
R_s(p_s) = \lambda\, p_s\big(\alpha\,[T(p_s, v_h)]^+ + (1-\alpha)\,[T(p_s, v_l)]^+\big) = \alpha [R_p(p_s, v_h)]^+ + (1-\alpha)[R_p(p_s, v_l)]^+.
\label{eq:RS_model}
\end{equation*}
\begin{figure}[hbtp]
 \centering
 \includegraphics[scale=0.36]{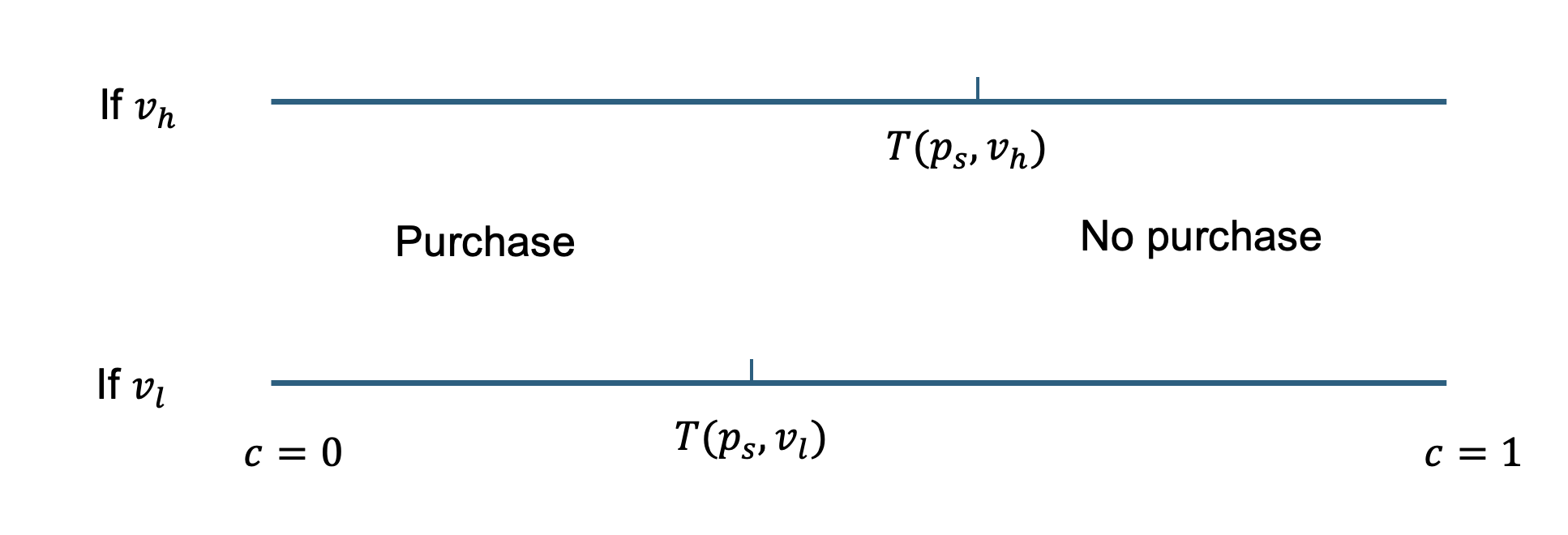}
\caption{Customer Behavior under Spot Selling}
\label{fig:SpotS}
\end{figure}

To find the optimal spot selling price, we need to maximize $R_s(p_s)$. Lemma~\ref{lemma:OptSpot} characterizes the optimal price structure.

\begin{lemma}\label{lemma:OptSpot}
Assume service valuation follows the two-point distribution in (\ref{DisV}). Under spot selling, if $p^*(v_h) = \frac{1 + \lambda v_h - \sqrt{1 + \lambda v_h}}{\lambda} \ge v_l$, then there exists a threshold probability $\hat{\alpha}\in(0,1)$ such that if $\alpha>\hat{\alpha}$, the optimal revenue is achieved at price $p^*(v_h)$. In all other cases, the optimal revenue is achieved at price $p_1^*\in(0,v_l)$, where $p_1^*$ solves $\frac{dR_s(p_s)}{dp_s} = 0$ on the interval $[0, v_l]$.
\end{lemma}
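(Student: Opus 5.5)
The plan is to split the price axis at $v_l$ and maximize $R_s$ on each piece separately. On $[0,v_l]$ both positive parts are active, so
\[
R_s(p_s)=\alpha R_p(p_s,v_h)+(1-\alpha)R_p(p_s,v_l).
\]
On $[v_l,v_h]$ the low-state term vanishes, so $R_s(p_s)=\alpha R_p(p_s,v_h)$. Prices above $v_h$ yield zero revenue and can be discarded.

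\textbf{Step 1: concavity of $R_p$.} Write $x=v-p$. Then
\[
R_p=\frac{\lambda\mu\,p\,x}{\lambda x+1}.
\]
The factor $\frac{x}{\lambda x+1}$ is positive, increasing and concave in $x$, and so decreasing and concave in $p$. A direct second-derivative computation should confirm that $p\cdot\frac{x}{\lambda x+1}$ is concave in $p$ on $[0,v)$, as the paper asserts for the base model.

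\textbf{Step 2: the low-price branch $[0,v_l]$.} Here $R_s$ is a convex combination of concave functions, hence concave. At $p_s=0$ we have $R_s=0$ with positive derivative, since $\partial_p R_p(0,v)=\lambda T(0,v)>0$. At $p_s=v_l$ the derivative equals
\[
\alpha\,\partial_pR_p(v_l,v_h)+(1-\alpha)\,\partial_pR_p(v_l,v_l),
\]
and the second term equals $-\lambda\mu v_l<0$.
\begin{itemize}
\item If this derivative is negative, there is a unique interior maximizer $p_1^*\in(0,v_l)$ solving the first-order condition.
\item If it is nonnegative, which happens only when $\alpha$ is large and $p^*(v_h)>v_l$, the maximum on $[0,v_l]$ is at $v_l$. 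That value is then dominated by the high branch, because $\alpha R_p(\cdot,v_h)$ is still increasing past $v_l$.
\end{itemize}

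\textbf{Step 3: the high-price branch $[v_l,v_h]$.} The function $\alpha R_p(p,v_h)$ is concave with unconstrained maximizer $p^*(v_h)$.
\begin{itemize}
\item If $p^*(v_h)<v_l$, the maximum on this branch is attained at $v_l$. By continuity at $v_l$, that value is no larger than the low-branch optimum, so $p_1^*$ is globally optimal. This is the ``all other cases'' clause.
\item If $p^*(v_h)\ge v_l$, the high-branch value is $\alpha R^*(v_h)$. The comparison reduces to
\[
\alpha R^*(v_h)\quad\text{versus}\quad L(\alpha):=\max_{p\in[0,v_l]}\bigl[\alpha R_p(p,v_h)+(1-\alpha)R_p(p,v_l)\bigr].
\]
\end{itemize}

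\textbf{Step 4: the threshold $\hat\alpha$.} This is the main obstacle. Define
\[
D(\alpha)=\alpha R^*(v_h)-L(\alpha).
\]
$L$ is a maximum of functions affine in $\alpha$, so it is convex, and therefore $D$ is concave. I would rather exploit a single-crossing argument using the envelope theorem, by dividing by $\alpha$. Write
\[
\frac{L(\alpha)}{\alpha}=\max_{p\le v_l}\Bigl[R_p(p,v_h)+\tfrac{1-\alpha}{\alpha}R_p(p,v_l)\Bigr].
\]
Because $R_p(p,v_l)\ge 0$ on $[0,v_l]$ and $\frac{1-\alpha}{\alpha}$ is strictly decreasing, $L(\alpha)/\alpha$ is nonincreasing in $\alpha$. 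It is strictly decreasing, because the maximizer lies in $(0,v_l)$ where $R_p(\cdot,v_l)>0$. Hence $D(\alpha)/\alpha=R^*(v_h)-L(\alpha)/\alpha$ is strictly increasing.

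The endpoint checks are:
\begin{itemize}
\item As $\alpha\to0$, $D(\alpha)\to -R^*(v_l)<0$.
\item At $\alpha=1$, $L(1)=\max_{p\le v_l}R_p(p,v_h)$. This is at most $R^*(v_h)$, with equality only in the boundary case $p^*(v_h)=v_l$.
\end{itemize}
For $p^*(v_h)>v_l$ we get $D(1)>0$, so a unique $\hat\alpha\in(0,1)$ exists with $D>0$ exactly when $\alpha>\hat\alpha$. In that case the optimum is $p^*(v_h)$; otherwise it is $p_1^*$. The Step 2 subcase in which the low-branch maximum sits at $v_l$ is automatically in the region $D>0$, so it is consistent with the claim.

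The equality case $p^*(v_h)=v_l$ makes $D(1)=0$, which would push $\hat\alpha$ to $1$. I would handle it by noting that the two candidate prices then coincide at $v_l$, so the dichotomy is harmless, or by treating it as a limiting case.
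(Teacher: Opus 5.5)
Your argument is correct and follows the paper's skeleton. You split at $v_l$, compare the high-branch value $\alpha R^*(v_h)$ with the low-branch maximum as functions of $\alpha$, and get a single crossing from the endpoint signs plus monotonicity of the difference.

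You differ from the paper in the monotonicity step. The paper applies the envelope theorem to $R_{s1}(p_1^*)$ and shows $\frac{d}{d\alpha}\bigl[\alpha R^*(v_h)-R_{s1}(p_1^*)\bigr]=R^*(v_h)-R_p(p_1^*,v_h)+R_p(p_1^*,v_l)>0$ whenever $p_1^*<v_l$. That argument uses both $R^*(v_h)\ge R_p(p_1^*,v_h)$ and $R_p(p_1^*,v_l)>0$. You instead divide by $\alpha$, which removes the linear term. Monotonicity of $L(\alpha)/\alpha$ then follows from a derivative-free comparison that needs only $R_p(\cdot,v_l)\ge 0$ on $[0,v_l]$.

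Your route needs no differentiability of the value function. It also never extends $R_{s1}$ past $v_l$, because $L$ is the maximum over $[0,v_l]$ only. This is a real advantage. The paper's extension to $[0,v_h]$ is delicate: $R_p(p,v_l)$ has a pole at $p=v_l+1/\lambda$ and is positive beyond it. So the claim $R_{s1}\le R_{s2}$ on $[v_l,v_h]$ fails when $v_h-v_l>1/\lambda$.

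Two small corrections.

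First, $L(\alpha)/\alpha$ is strictly decreasing only while the low-branch maximizer is interior. In the corner regime it is constant, equal to $R_p(v_l,v_h)$. This is harmless, because you showed $D>0$ there.

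Second, your treatment of $p^*(v_h)=v_l$ is wrong: the two candidate prices do not coincide. For $\alpha<1$ one has $R_{s1}'(v_l)=-(1-\alpha)\lambda\mu v_l<0$, so $p_1^*<v_l$ and $L(\alpha)>R_{s1}(v_l)=\alpha R^*(v_h)$. Hence $D<0$ on all of $(0,1)$, $p^*(v_h)$ is never optimal, and no $\hat\alpha\in(0,1)$ with the stated property exists. This case belongs to \emph{all other cases}; the paper's own proof in fact splits on $p^*(v_h)\le v_l$. The $\hat\alpha$ clause should therefore be read with the strict hypothesis $p^*(v_h)>v_l$, rather than rescued by a limiting argument.
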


\begin{proof}
We analyze $R_s(p_s)$ by considering two different price intervals.
\begin{itemize}
\item \textbf{Left interval: $p_s\in[0,v_l]$}. $R_s(p_s)= R_{s1}(p_s):= \alpha R_p(p_s, v_h) + (1-\alpha)R_p(p_s, v_l)$ is a convex combination of two concave functions, and thus is also concave.  
\item \textbf{Right interval: $p_s\in[v_l,v_h]$}. Since $p_s \ge v_l$, we have $T(p_s, v_l) \le 0$ and thus $R_p(p_s, v_l) \le 0$.  Therefore, $R_s(p_s) = R_{s2}(p_s):= \alpha R_p(p_s, v_h)$, which is concave on $[v_l, v_h]$. 
\end{itemize}
On the left interval, $R_p(p_s,v_l)\ge 0$ so $R_{s1}(p_s)\ge R_{s2}(p_s)$; on the right interval, $R_p(p_s,v_l)\le 0$ so $R_{s1}(p_s)\le R_{s2}(p_s)$. Therefore, we can extend the supports of $R_{s1}$ and $R_{s2}$ without affecting the maximum of $R_{s}$:
\[
\max_{p_s\in[0,v_h]} R_s(p_s) = \max_{p_s\in[0,v_h]} \max\{R_{s1}(p_s), R_{s2}(p_s)\} = \max\{\max_{p_s\in[0,v_h]}R_{s1}(p_s),\max_{p_s\in[0,v_h]}R_{s2}(p_s)\}
\]
Denote by $p_1^*$ the maximizer for $R_{s1}$ on $[0,v_h]$; from (\ref{OptP}), the maximizer for $R_{s2}$ on $[0,v_h]$ is $p^*(v_h)$. 
We have 
\begin{equation} \label{OptSpotRev}
    R_s^* := \max_{p_s\in[0,v_h]} R_s(p_s)= \max\{R_{s1}(p_1^*), \alpha R^*(v_h)\}. 
\end{equation}

Next we compare the maximum on the left and right intervals. 

If $p^*(v_h) \le v_l$, then $R_{s2}(p_s)$ is decreasing on $[v_l,v_h]$. Therefore, the optimal price is located on $[0,v_l]$. Since the optimizer of $R_p(p_s,v_l)$ is $p^*(v_l)<p^*(v_h)\le v_l$, the weighted average $R_s(p)$ is decreasing on $[v_l,v_h]$. We find the optimal price $p_1^*\in(0,v_l)$ by solving the first-order condition $R_{s1}'(p_s) = 0$.  

If $p^*(v_h) > v_l$, then we need to compare the maximum value on $[0, v_l]$ with the maximum value on $[v_l, v_h]$. When $p_1^* \ge v_l$, $R_{s1}$ is increasing on $[0,v_l]$ and the optimal price is located on the right interval and should be $p^*(v_h)$. We focus on the case when $p_1^*<v_l$.  We examine how the maximum revenues $R_{s1}(p_1^*)$ on the left interval and $\alpha R^*(v_h)$ on the right interval change with $\alpha$.  Using the envelope theorem, the derivative of $R_{s1}(p_1^*)$ with respect to $\alpha$ is given by $\frac{d R_{s1}(p_1^*)}{d \alpha} = \frac{\partial R_{s1}}{\partial \alpha}|_{p_s = p_1^*} = R_p(p_1^*, v_h) - R_p(p_1^*, v_l) > 0$, since $p_1^* < v_l < v_h$ and $R_p(p_1^*,v)$ is increasing in $v$.  On the right interval, $\frac{d (\alpha R^*(v_h))}{d \alpha} = R^*(v_h) = R_p(p^*(v_h), v_h) > R_p(p_1^*, v_h)$, since $p^*(v_h)$ maximizes $R_p(p, v_h)$.
It is clear that $\frac{d (\alpha R^*(v_h))}{d \alpha}> \frac{d R_{s1}^*}{d \alpha}$. Thus, as $\alpha$ increases from 0 to 1, $\alpha R^*(v_h)$ increases faster than $R_{s1}(p^*)$.  When $\alpha=0$, $R_{s1}(p^*) = R_p(p_l^*, v_l) >\alpha R^*(v_h)= 0$; when $\alpha=1$,  $\alpha R^*(v_h) = R_p(p^*(v_h), v_h)> R_{s1}(p^*)$.  Due to the monotonicity and continuity with respect to $\alpha$, there must exist a threshold $\hat{\alpha} \in (0, 1)$ such that when $\alpha = \hat{\alpha}$, $R_{s1}(p^*) = \alpha R^*(v_h)$. Note that $p^*(v_h)$ does not change with $\alpha$. For $\alpha<\hat{\alpha}$, $R_{s1}(p^*)>\alpha R^*(v_h)>R_s(v_l)$ (the second inequality is due to $p^*(v_h)>v_l$), implying $p_1^*<v_l$ and $p_1^*$ is the FOC point.  For $\alpha> \hat{\alpha}$, $\alpha R^*(v_h)>R_{s1}(p^*)$, the maximum on $(v_l,v_h]$ dominates the maximum on $[0,v_l]$ either at $p_1^*$ or the corner point $v_l$.  
\end{proof}

\paragraph{Advance Selling.}
Under advance selling,  purchase decisions are made before valuation uncertainty is resolved. 
Customers who purchase in advance may find the realized valuation $v_l$ is too low and do not want to join the queue. Solving $v_l - T_l W(T_l) = 0$, we obtain $T_l =\frac{\mu v_l}{\lambda v_l+1}$. Customers whose waiting cost is above $T_l$ would choose to balk if they purchase in advance and the realized value is $v_l$. We study two cases, illustrated in Figure~\ref{fig:AdvS}. 
\begin{figure}[hbtp]
 \centering
 \includegraphics[scale=0.3]{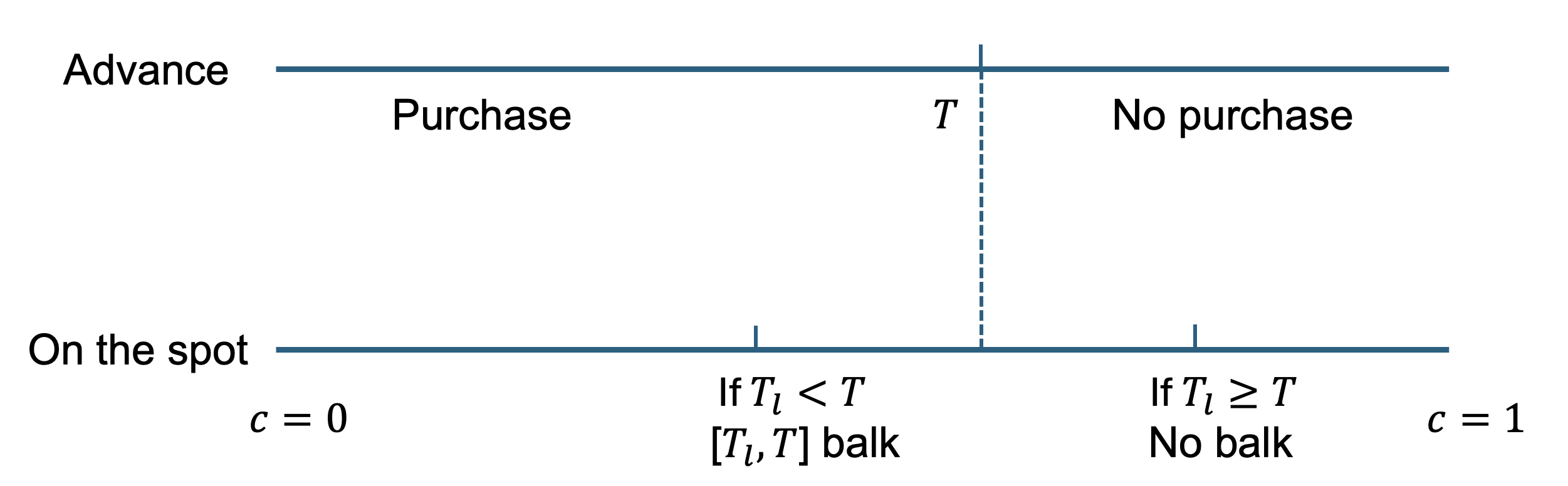}
\caption{Customer Behavior under Advance Selling}
\label{fig:AdvS}
\end{figure}

{\bf Case 1:} Customers who purchase in advance still join the queue in every realized state because their realized utility from joining remains nonnegative. In this case, the analysis becomes analogous to the base model, with the deterministic valuation $v$ replaced by the expected valuation $\bar{v}$. From (\ref{eq:adv_val}), $p_a = \bar{v} - T\,W(T)$. We write the expected revenue as a function of $T$
\[
R_T(T,\bar{v}) := \lambda\,(\bar{v} - T\,W(T))T, \quad 0\le T \le T_l. 
\]
$R_T$ is concave in $T$. We can solve the optimal cutoff and revenue
\begin{equation}\label{OptT}
   T_1^*=\frac{\mu}{\lambda}\bigg(1-\frac{1}{\sqrt{1+\lambda \bar{v}}}\bigg), \quad R_T(T,\bar{v}) = R^*(\bar{v}). 
\end{equation}
 If $T_1^* < T_l$, then the optimal revenue in this case is $R^*(\bar{v})$; otherwise, it is $R_T(T_l, \bar{v})$. 

{\bf Case 2:} Some customers who purchase in advance balk the queue. In this case, when considering whether to purchase the advance ticket, the utility of the marginal customer located at $T$ satisfies 
\[
\alpha (v_h - T W(T)) - p_a =0.  
\]
The expected revenue as a function of $T$ is 
\[
\lambda\, p_a T = \alpha \lambda\,(v_h - T\,W(T))T = \alpha R_T(T,v_h), \quad T > T_l.
\]
This revenue function is also concave in $T$. We can solve the optimal cutoff $T_2^*=\frac{\mu}{\lambda}\big(1-\frac{1}{\sqrt{1+\lambda v_h}}\big)$. If $T_2^*\ge T_l$, then the optimal revenue in this case is $\alpha R^*(v_h)$, otherwise, it is $\alpha R_a(T_l,v_h) = R_a(T_l,\bar{v})$. 

The maximum expected revenue under advance selling is 
\[
R_a(T)=\begin{cases}
    R_T(T,\bar{v}) & \quad \mbox{ if }\, 0\le T\le T_l \\
    \alpha R_T(T,v_h) & \quad \mbox{ if } \, T_l< T< \frac{\mu_h}{\lambda}
    \end{cases}
\]
$R_T(T,\bar{v})\ge \alpha R_T(T,v_h)$ is equivalent to $T\le T_l$. Therefore, we can extend the supports of both functions without affecting the maximum of $R_a$: 
\[
\max_{T\in\left[0,\frac{\mu_h}{\lambda}\right)} R_a(T) = \max_{T\in\left[0,\frac{\mu_h}{\lambda}\right)} \max\{R_{T}(T,\bar{v}), \alpha R_{T}(T,v_h)\} = \max\left\{\max_{T\in\left[0,\frac{\mu_h}{\lambda}\right)}R_{T}(T,\bar{v}),\max_{T\in\left[0,\frac{\mu_h}{\lambda}\right)}\alpha R_{T}(T,v_h)\right\}
\] 
From (\ref{OptT}), we have 
\begin{equation}\label{OptAdvRev}
    R_a^*:=\max_{T\in\left[0,\frac{\mu_h}{\lambda}\right)} R_a(T) = \max \{R^*(\bar{v}), \alpha R^*(v_h)\}. 
\end{equation}

Next we compare the maximum on the left and right intervals. Define $\theta: = 2v_l + \lambda v_l^2$, $T_1^* < T_l$ is equivalent to $\bar{v} < \theta$, and $T_2^* > T_l$ is equivalent to $v_h > \theta$. We have three scenarios:

1) if $\theta\le \bar{v}$, then $T_1^*\ge T_l$ and $T_2^*>T_l$. $R_a(T)$ is increasing in $T$ on $[0,T_l]$ and the global maximizer is $T_2^*$. 

2) if $\theta\ge v_h$, then $T_1^* < T_l$ and $T_2^*\le T_l$. $R_a(T)$ is decreasing in $T$ on $[T_l,1]$ and the global maximizer is $T_1^*$.  

3) if $\bar{v}<\theta<v_h$, then $T_1^*< T_l<T_2^*$. Then we need to compare the maximum on the left and right intervals. Solving $R^*(\bar{v}) = \alpha R^*(v_h)$ gives a trivial solution $\alpha =1$ and a non-trivial solution $\alpha = \tilde{\alpha}:=\frac{v_l^2(\lambda^2 v_l^2 + 4\lambda(v_h + v_l + \sqrt{1+\lambda v_h}v_l) + 8(1+\sqrt{1+\lambda v_h}))}{(4v_h-v_l(4+\lambda v_l))^2}$. When $0<\alpha<\tilde{\alpha}$, $R^*(\bar{v}) > \alpha R^*(v_h)$; when $\tilde{\alpha}<\alpha<1$, $R^*(\bar{v}) < \alpha R^*(v_h)$. Note $\tilde{\alpha}$ could be larger than 1. 
We have the following Lemma.
\begin{lemma}\label{lemma:OptAdv}
When service valuation follows the two-point distribution in (\ref{DisV}). Under advance selling, we have three scenarios: 1) if $\theta\le \bar{v}$, then the optimal cutoff is $T_2^*$ and the optimal revenue is $\alpha R^*(v_h)$; 2) if $\theta \ge v_h$, then the optimal cutoff is $T_1^*$ and the optimal revenue is $R^*(\bar{v})$; 3) if $\bar{v}<\theta<v_h$, when $0<\alpha \le\tilde{\alpha}$, the optimal cutoff is $T_1^*$, when $\tilde{\alpha}<\alpha<1$, the optimal cutoff is $T_2^*$. 
\end{lemma}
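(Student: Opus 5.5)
The argument rests on the decomposition (\ref{OptAdvRev}). The advance-selling revenue equals the upper envelope $\max\{R_T(T,\bar v),\alpha R_T(T,v_h)\}$. The two branches cross exactly at $T=T_l$: expanding $R_T(T,\bar v)-\alpha R_T(T,v_h)=\lambda T(1-\alpha)(v_l-TW(T))$ shows the difference is nonnegative precisely when $T\le T_l$, since $TW(T)$ is increasing. Each branch $R_T(\cdot,v)$ is a concave function of $T$ with unconstrained maximizer $\frac{\mu}{\lambda}(1-1/\sqrt{1+\lambda v})$. This gives $T_1^*$ for $v=\bar v$ and $T_2^*$ for $v=v_h$, with values $R^*(\bar v)$ and $\alpha R^*(v_h)$. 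So the task reduces to locating $T_1^*,T_2^*$ relative to $T_l$ and then comparing the two branch maxima when both are feasible.

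First I would verify the equivalences $T_1^*<T_l \iff \bar v<\theta$ and $T_2^*>T_l\iff v_h>\theta$. Since the map $v\mapsto \frac{\mu}{\lambda}(1-1/\sqrt{1+\lambda v})$ is increasing, it suffices to solve $1-1/\sqrt{1+\lambda v}=\lambda v_l/(\lambda v_l+1)$. This gives $\sqrt{1+\lambda v}=1+\lambda v_l$, i.e.\ $v=2v_l+\lambda v_l^2=\theta$. Also $\bar v<v_h$, so $\theta$ falls into one of three ranges.

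In scenario 1 ($\theta\le\bar v$), $T_1^*\ge T_l$, so by concavity $R_a=R_T(\cdot,\bar v)$ is increasing on $[0,T_l]$. Its maximum there is at $T_l$, where it equals $\alpha R_T(T_l,v_h)$. On the right piece, $T_2^*>T_l$, so $\alpha R_T(\cdot,v_h)$ is maximized at the interior point $T_2^*$ with value $\alpha R^*(v_h)\ge\alpha R_T(T_l,v_h)$. Hence $T_2^*$ is the global maximizer. Scenario 2 ($\theta\ge v_h$) is symmetric: $T_2^*\le T_l$, so the right piece is decreasing on $[T_l,\mu/\lambda)$ with supremum at $T_l$, which is dominated by $R^*(\bar v)$ attained at $T_1^*<T_l$.

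In scenario 3, both branch maximizers lie in their own feasible pieces, so the global optimum is whichever of $R^*(\bar v)$ and $\alpha R^*(v_h)$ is larger. I would study $g(\alpha)=R^*(\bar v(\alpha))-\alpha R^*(v_h)$. Here $R^*(v)=\frac{\mu}{\lambda}(\sqrt{1+\lambda v}-1)^2$ is convex in $v$ (second derivative $\propto (1+\lambda v)^{-3/2}>0$), and $\bar v$ is affine in $\alpha$, so $g$ is convex in $\alpha$. We also have $g(1)=0$ and $g(0)=R^*(v_l)>0$. A convex function with $g(0)>0$ and $g(1)=0$ has at most one further zero $\tilde\alpha$ in $(0,1)$, with $g>0$ before it and $g<0$ after it. If $g$ has no zero in $(0,1)$, this corresponds to $\tilde\alpha\ge1$, where the claim holds vacuously with $T_1^*$ optimal throughout. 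To identify $\tilde\alpha$ with the displayed formula, I would set $s=\sqrt{1+\lambda v_h}$, write $\sqrt{1+\lambda\bar v}-1=\sqrt\alpha(s-1)$, and square twice to get a quadratic in $\alpha$. One root is $\alpha=1$, and the other is the stated $\tilde\alpha$. Ties at $\alpha=\tilde\alpha$ are assigned to $T_1^*$, matching the statement.

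The main obstacle is scenario 3. Convexity gives the sign structure cleanly, but the explicit closed form of $\tilde\alpha$ requires careful algebra, including discarding an extraneous root introduced by squaring. If that algebra becomes messy, I would state the lemma with $\tilde\alpha$ defined implicitly as the nontrivial root of $g$, which is all the ordering argument needs.
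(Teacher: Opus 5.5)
Your proposal is correct and follows the paper's own route. Both arguments use the envelope $\max\{R_T(T,\bar v),\alpha R_T(T,v_h)\}$ with the crossing at $T_l$, the equivalences $T_1^*<T_l\iff\bar v<\theta$ and $T_2^*>T_l\iff v_h>\theta$, the same three scenarios, and, in scenario 3, a comparison of $R^*(\bar v)$ with $\alpha R^*(v_h)$ via the roots $\alpha=1$ and $\tilde\alpha$. Your strict-convexity argument for $g(\alpha)$ is a welcome addition, since the paper only asserts the sign pattern around $\tilde\alpha$ after solving for the roots. (In fact $\theta<v_h$ forces $\sqrt{\tilde\alpha}=\lambda v_l/\bigl(2(\sqrt{1+\lambda v_h}-1)-\lambda v_l\bigr)\in(0,1)$, so the no-root case never arises in scenario 3.)
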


\textbf{Revenue Comparison.} 
We are ready to compare the maximum revenues of advance selling in (\ref{OptSpotRev}) and spot selling in (\ref{OptAdvRev}) under uncertain valuation.

\begin{prop} \label{prop:compare1}
When service valuation follows the two-point distribution in (\ref{DisV}), $R_s^*\le R_a^*$, where the equality is attained only if $p^*(v_h)\ge v_l$ and $\alpha>\hat{\alpha}$. 
\end{prop}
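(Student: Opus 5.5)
We compare the two closed-form maxima from (\ref{OptSpotRev}) and (\ref{OptAdvRev}), namely $R_s^*=\max\{R_{s1}(p_1^*),\alpha R^*(v_h)\}$ and $R_a^*=\max\{R^*(\bar v),\alpha R^*(v_h)\}$. The term $\alpha R^*(v_h)$ appears in both. It therefore suffices to prove $R_{s1}(p_1^*)\le R^*(\bar v)$, with strict inequality. The statement then follows: $R_s^*\le R_a^*$ always, and equality forces $R_s^*=\alpha R^*(v_h)>R_{s1}(p_1^*)$. By Lemma~\ref{lemma:OptSpot}, this is exactly the regime $p^*(v_h)\ge v_l$ and $\alpha>\hat\alpha$, where the spot optimum is attained at $p^*(v_h)$.

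For the key inequality, recall that for any $p<v$ we have $R_p(p,v)=\lambda p T(p,v)=\frac{\lambda\mu p(v-p)}{\lambda(v-p)+1}$. Fix $p$ and write $x=v-p>0$. The map $x\mapsto \frac{x}{\lambda x+1}$ is strictly concave, so $v\mapsto R_p(p,v)$ is strictly concave on $v>p$. For any $p\in[0,v_l]$ (with $p<v_l$ when $p>0$), Jensen's inequality over the two-point valuation gives
\[
R_{s1}(p)=\alpha R_p(p,v_h)+(1-\alpha)R_p(p,v_l)< R_p(p,\bar v)\le R^*(\bar v).
\]
The strictness holds because $v_h\neq v_l$ and $0<\alpha<1$. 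The endpoint $p=v_l$ is handled the same way, using concavity extended to $x\ge 0$.

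One care point is that $R_{s1}$ is maximized over $[0,v_h]$ in the paper's extension. For $p>v_l$, however, $R_{s1}(p)\le R_{s2}(p)$. So in comparing maxima we may take $R_{s1}$ only on $[0,v_l]$: any value of $R_{s1}$ above $v_l$ is dominated by $\alpha R^*(v_h)$, which is already in both maxima. Hence $R_s^*\le\max\{\sup_{[0,v_l]}R_{s1},\alpha R^*(v_h)\}\le R_a^*$.

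A second care point concerns the advance-selling side. We must justify that $R^*(\bar v)$ in (\ref{OptAdvRev}) is attainable, or at least that $R_a^*\ge R^*(\bar v)$ whenever it is needed. Lemma~\ref{lemma:OptAdv} relies on the extension argument: $R_T(T,\bar v)\ge\alpha R_T(T,v_h)$ exactly when $T\le T_l$, so the max-of-max identity holds. I would invoke this directly.

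The main obstacle is the equality clause. We need the reverse direction: when $p^*(v_h)\ge v_l$ and $\alpha>\hat\alpha$, we have $R_s^*=\alpha R^*(v_h)$, and then equality requires $\alpha R^*(v_h)\ge R^*(\bar v)$. The statement only asserts that equality is attained \emph{only if} these conditions hold, so just the necessity direction must be proved, and the strict Jensen step gives it.
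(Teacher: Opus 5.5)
Your proposal is correct and follows the paper's proof: you reduce to $R_{s1}(p_1^*)<R^*(\bar v)$ by strict Jensen on the concave map $v\mapsto R_p(p,v)$, and then use Lemma~\ref{lemma:OptSpot} to show that equality requires $p^*(v_h)\ge v_l$ and $\alpha>\hat\alpha$. Your refinements are sound: restricting $R_{s1}$ to $[0,v_l]$ and noting that only the necessity direction is claimed. One small point: strict Jensen fails at $p=0$, where $R_p(0,\cdot)\equiv 0$, but there $R_{s1}(0)=0<R^*(\bar v)$ holds trivially, so nothing breaks.
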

\begin{proof}
We only need to compare $R^*(\bar{v})$ with $R_{s1}(p_1^*)$. Since $R_p(p_s, v)$ is strictly concave in $v$ for any fixed $p$, and $\bar{v} = \alpha v_h + (1-\alpha)v_l,0 < \alpha < 1$, we have:
\[
R_{s1}(p) = \alpha R_p(p, v_h) + (1-\alpha) R_p(p, v_l) < R_p(p, \alpha v_h + (1-\alpha) v_l) = R_p(p, \bar{v})\le R^*(\bar{v}).
\]
The first inequality is from Jensen's inequality, and the second inequality is because $R^*(\bar{v})$ is the maximum revenue for the revenue function $R_p(p, \bar{v})$.  Therefore, for $\alpha\in(0,1)$, $R^*(\bar{v})$ dominates the maximum of $R_{s1}(p)$. That is, $R^*(\bar{v})>R_{s1}(p_1^*)$, which implies $R_s^* \le R_a^*$. The equality is achieved only when $R_a^* = R_s^* = \alpha R^*(v_h)$. Since $R^*(\bar{v})>R_{s1}(p_1^*)$ when $0<\alpha<1$, the set $\{\alpha: R_a^* = \alpha R^*(v_h)\}$ is a subset of $\{\alpha: R_s^* = \alpha R^*(v_h)\}$. From Lemma~\ref{lemma:OptSpot}, only when $p^*(v_h)\ge v_l$ and $\alpha>\hat{\alpha}$, we have $R_a^*=R_s^*$.   
\end{proof}

\textbf{Remark:}  Consider the setting without waiting costs. Suppose there is a unit mass of customers, and each customer's valuation follows the distribution in (\ref{DisV}).  Under advance selling, the optimal price is the expected valuation $\bar{v} = \alpha v_h + (1-\alpha)v_l$, and the expected revenue is also $\bar{v}$.  Under spot selling, however, with a single spot price, the optimal strategy is to set price $p_s = v_l$ if $v_l > \alpha v_h$ or $p_s = v_h$ otherwise. The expected revenue is $\max\{ \alpha v_h, v_l\} < \alpha v_h + (1-\alpha)v_l = \bar{v}$.  This special case also confirms advance selling is \emph{always} more profitable than spot selling when a single price is used in spot selling, which aligns with the findings of \cite{XieShugan2001}. The presence of waiting costs and queueing effects in our model does not alter this conclusion when the valuation is uncertain.

\subsection{Uncertain Capacity}
Now we consider the case where customer valuation is fixed at $v$, but the service capacity $\mu$ is uncertain and follows a two-point distribution:
\begin{equation} \label{MuDis}
   \mu = \begin{cases}
\mu_h, & \text{with probability } \alpha,\\[1mm]
\mu_l, & \text{with probability } 1-\alpha,
\end{cases} 
\end{equation}
where $\lambda \ge \mu_h>\mu_l$, since we focus on service systems with scarce capacity.  Let $\bar{\mu} = \alpha \mu_h + (1-\alpha) \mu_l$ be the expected capacity. 

\paragraph{Spot Selling.}
Under spot selling, the capacity is observed \emph{before} purchase.  The seller sets a spot price $p_s$. To streamline the notation, in this section, we reuse symbols from previous sections. From (\ref{eq:T}), we define $T(p,\mu):=\frac{\mu(v-p)}{\lambda(v-p)+1}$. When the capacity is observed to be high, the marginal customer type is $T(p_s, \mu_h)$.  When the capacity is low, the marginal customer type is $T(p_s, \mu_l)$.  Noting $T(p_s,\mu)$ is linear in $\mu$, the expected revenue under spot selling is:
\begin{equation*}
R_s(p_s) =\lambda\, p_s(\alpha T(p_s, \mu_h) + (1-\alpha)T(p_s, \mu_l)) =\lambda\, p_s T(p_s, \bar{\mu}). 
\end{equation*}
This revenue function is of the same form as in the base model (\ref{eq:RevD}), just with $\mu$ replaced by $\bar{\mu}$. Defining $R^*(\mu):= \frac{\mu(\sqrt{1 + \lambda v}-1)^2}{\lambda}$, we therefore have the following lemma. 
\begin{lemma}\label{lemma:OptSpot2}
When service capacity follows the two-point distribution in (\ref{MuDis}). Under spot selling, the optimal price is $p^*(v)$. The maximal revenue is $R_s^* = R^*(\bar{\mu})$. 
\end{lemma}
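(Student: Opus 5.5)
The plan is to reduce the problem to the base model and then reuse the optimization of the base model directly.

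\emph{Step 1: the equilibria are interior in each state.} With $v$ fixed, $\mu\in\{\mu_h,\mu_l\}$ and $p_s<v$, the cutoff $T(p_s,\mu)=\frac{\mu(v-p_s)}{\lambda(v-p_s)+1}$ is positive. Because $\lambda\ge\mu_h>\mu_l$, it also satisfies $T(p_s,\mu)<\mu/\lambda\le 1$. So no positive-part truncation is needed, unlike the valuation case of Lemma~\ref{lemma:OptSpot}, and stability $\mu>\lambda T$ holds in both states. For $p_s\ge v$ nobody joins in either state and revenue is zero. It therefore suffices to optimize over $p_s\in[0,v)$.

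\emph{Step 2: pool the two states.} Since $T(p,\mu)$ is linear in $\mu$, the expected revenue is
\[
R_s(p_s)=\lambda p_s T(p_s,\bar\mu)=\bar\mu\cdot\frac{\lambda p_s(v-p_s)}{\lambda(v-p_s)+1}.
\]
This is exactly $R_p(p_s,v)$ from (\ref{eq:RevD}) with $\mu$ replaced by $\bar\mu$. The key observation is that $\bar\mu$ enters only as a multiplicative constant, so the maximizer does not depend on it. By the concavity noted for (\ref{eq:RevD}), the first-order condition gives the unique maximizer $p^*(v)$ of (\ref{OptP}), and $p^*(v)<v$ keeps it inside the admissible range.

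\emph{Step 3: compute the optimal revenue.} Substituting $p^*(v)$ gives $T(p^*(v),\bar\mu)=\frac{\bar\mu}{\lambda}\big(1-\frac{1}{\sqrt{1+\lambda v}}\big)$. The revenue is then $R_s^*=\frac{\bar\mu(\sqrt{1+\lambda v}-1)^2}{\lambda}=R^*(\bar\mu)$. Equivalently, this follows from linearity: $R_s^*=\alpha R^*(\mu_h)+(1-\alpha)R^*(\mu_l)=R^*(\bar\mu)$, since $R^*(\mu)$ is linear in $\mu$.

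\emph{Main obstacle.} The only real point to check is that a single price is simultaneously optimal in both states. This holds because the state-wise optimal price $p^*(v)$ does not depend on $\mu$. The spot seller therefore loses nothing by being restricted to one price, and the lemma follows.
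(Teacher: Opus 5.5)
Your proposal is correct and follows the paper's argument: because $T(p_s,\mu)$ is linear in $\mu$, the spot revenue collapses to the base-model revenue with $\mu$ replaced by $\bar\mu$, and its maximizer $p^*(v)$ does not depend on $\bar\mu$. Your Step~1 check also states explicitly something the paper leaves implicit. Since $v$ is common to both states, no positive-part truncation arises, so the revenue $R_s(p_s)$ written without $[\cdot]^+$ in the paper is justified.
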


\paragraph{Advance Selling.}
In advance selling, the price $p_a$ is set before the capacity is revealed. Customers, when deciding whether to purchase, form expectations about waiting time since the realized capacity is unknown at that stage. Similar to the advance selling under the uncertain valuation case, customers who have purchased the ticket may balk the queue if the realized capacity turns out to be low, $\mu_l$.  Define $W(T,\mu):=\frac{1}{\mu-\lambda T}$.  Solving the indifference condition $v-TW(T,\mu_l)=0$ yields the balking cutoff $T_l := \frac{\mu_l v}{1+\lambda v}$. If the purchase cutoff satisfies $T\le T_l$, no customer balks the queue. In this case, the expected waiting time is: $\alpha\,W(T,\mu_h) + (1-\alpha)\,W(T,\mu_l)$. The expected utility of the marginal customers at cutoff $T$ satisfies $v - p_a - T\,(\alpha\,W(T,\mu_h) + (1-\alpha)\,W(T,\mu_l))=0$. Define the revenue function $R_T(T,\mu): = \lambda(v-T W(T,\mu))T$. Then, the revenue under advance selling becomes
\[
R_{a1}(T)= \lambda (v - T\,(\alpha W(T,\mu_h) + (1-\alpha) W(T,\mu_l))) T = \alpha R_T(T,\mu_h) + (1-\alpha) R_T(T,\mu_l). 
\]
Since $R_T(T,\mu)$ is concave in $T$, the weighted sum $R_{a1}(T)$ is also concave in $T$. Let $T_1^*$ denote the maximizer of $R_{a1}(T)$, obtained from its first-order condition. 

If the purchase cutoff satisfies $T>T_l$, the customers whose type is on $(T_l, T]$ would balk the queue when they observe the low capacity $\mu_l$. The expected utility of customers at cutoff $T$ satisfies $\alpha(v - T\,W(T,\mu_h))-p_a=0$. Then, the revenue under advance selling is given by:
\[
\lambda p_a T = \lambda \alpha(v - T\,W(T,\mu_h)) T = \alpha R_T(T,\mu_h). 
\]
The maximum expected revenue under advance selling is 
\[
R_a(T)=\begin{cases}
    R_{a1}(T) & \quad \mbox{ if }\, 0\le T\le T_l \\
    \alpha R_T(T,\mu_h) & \quad \mbox{ if } \, T_l< T\le \frac{\mu_h}{\lambda}
    \end{cases}
\]
Since $R_T(T,\mu_l)\ge0$ when $0\le T\le T_l$ and $R_T(T,\mu_l)<0$ when $T_l< T<1$, $R_{a1}(T)\ge \alpha R_T(T,\mu_h)$ when $0\le T\le T_l$ and $R_{a1}(T)< \alpha R_T(T,\mu_h)$ when $T_l<T\le 1$. Therefore, we can extend the supports of both functions without affecting the maximum of $R_a$: 
\[
\max_{T\in\left[0,\frac{\mu_h}{\lambda}\right)} R_a(T) = \max_{T\in\left[0,\frac{\mu_h}{\lambda}\right)} \max\{R_{a1}(T), \alpha R_{T}(T,\mu_h)\} = \max\left\{\max_{T\in\left[0,\frac{\mu_h}{\lambda}\right)}R_{a1}(T),\max_{T\in\left[0,\frac{\mu_h}{\lambda}\right)}\alpha R_{T}(T,\mu_h)\right\}
\] 
Solving the first-order condition of $R_T(T,\mu_h)$ gives $T_h^*:= \frac{\mu_h}{\lambda}\big(1-\frac{1}{\sqrt{1+\lambda v}}\big)$. 
\begin{equation*}\label{OptAdvRev2}
    R_a^*:=\max_{T\in\left[0,\frac{\mu_h}{\lambda}\right)} R_a(T) = \max \{R_{a1}(T_1^*), \alpha R^*(\mu_h)\}. 
\end{equation*}
Lemma~\ref{lemma:OptAdv2} establishes the optimal threshold and revenue under advance selling. 
\begin{lemma}\label{lemma:OptAdv2}
When service capacity follows the two-point distribution in (\ref{MuDis}). Under advance selling, if $T_h^* \ge T_l$, then there exists a threshold probability $\alpha_T\in(0,1)$ such that if $\alpha>\alpha_T$, the optimal revenue is achieved at threshold $T_h^*$. In all other cases, the optimal revenue is achieved at threshold $T_1^*\in(0,T_l)$, where $T_1^*$ solves the first-order condition for $R_{a1}(T)$. 
\end{lemma}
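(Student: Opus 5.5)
The plan mirrors the proof of Lemma~\ref{lemma:OptSpot}, with the roles of price and cutoff interchanged. From the extension argument above, $R_a^*=\max\{R_{a1}(T_1^*),\alpha R^*(\mu_h)\}$, where $T_1^*$ maximizes the concave function $R_{a1}$ on $[0,\mu_h/\lambda)$. The proof then reduces to comparing these two quantities and locating the maximizer in each regime.

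\textbf{Case $T_h^*<T_l$.} Here $\alpha R_T(T,\mu_h)$ is concave with peak at $T_h^*$, so it is decreasing on $[T_l,\mu_h/\lambda)$. Since $R_a=\alpha R_T(\cdot,\mu_h)$ on that interval, the maximum of $R_a$ there is at most $\alpha R_T(T_l,\mu_h)=R_{a1}(T_l)$. Hence the global optimum lies in $[0,T_l]$.

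To show $T_1^*<T_l$, I would differentiate $R_{a1}$ at $T_l$:
\[
R_{a1}'(T_l)=\alpha R_T'(T_l,\mu_h)+(1-\alpha)R_T'(T_l,\mu_l).
\]
The first term is negative because $T_l>T_h^*$. For the second term, the maximizer of $R_T(\cdot,\mu_l)$ is $T_l^*:=\frac{\mu_l}{\lambda}\big(1-\frac{1}{\sqrt{1+\lambda v}}\big)$. This is below $T_l$ because $R_T(T_l,\mu_l)=0$ while $R_T$ is positive at smaller positive $T$; indeed $T_l$ is the positive root of $R_T(\cdot,\mu_l)$. So the second term is also negative. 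Concavity then puts the optimum at an interior first-order point $T_1^*\in(0,T_l)$, with $R_{a1}'(0)=\lambda v>0$ guaranteeing $T_1^*>0$.

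\textbf{Case $T_h^*\ge T_l$.} The claim is that the comparison is governed by a threshold in $\alpha$. If $T_1^*\ge T_l$, the left piece is increasing on $[0,T_l]$, so the optimum is $T_h^*$; I would need to check how this interacts with the threshold, possibly by defining $\alpha_T$ as an infimum. Otherwise, I study $D(\alpha):=\alpha R^*(\mu_h)-R_{a1}(T_1^*(\alpha))$.

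By the envelope theorem,
\[
\frac{d}{d\alpha}R_{a1}(T_1^*)=R_T(T_1^*,\mu_h)-R_T(T_1^*,\mu_l)<R_T(T_1^*,\mu_h)\le R^*(\mu_h),
\]
where the strict inequality uses $R_T(T_1^*,\mu_l)>0$ since $T_1^*<T_l$. So $D'(\alpha)>0$ whenever the interior regime holds. At the endpoints, $D(0)=-R^*(\mu_l)<0$ and $D(1)=R^*(\mu_h)-R_T(T_1^*,\mu_h)\ge 0$, with strictness because $T_1^*=T_h^*\ge T_l$ would contradict $T_1^*<T_l$. Continuity and monotonicity then give a unique $\alpha_T\in(0,1)$: for $\alpha>\alpha_T$ the optimum is $T_h^*$, and for $\alpha\le\alpha_T$ it is the interior $T_1^*$. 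Ties are resolved arbitrarily.

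\textbf{Main obstacle.} The delicate step is handling the boundary regime where $T_1^*(\alpha)$ may cross $T_l$ as $\alpha$ varies. The envelope argument needs $T_1^*<T_l$, and the endpoint evaluations need continuity of $T_1^*$ in $\alpha$.

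I would handle this as follows. When $T_1^*\ge T_l$, the left maximum over the true domain $[0,T_l]$ is $R_{a1}(T_l)=\alpha R_T(T_l,\mu_h)\le \alpha R^*(\mu_h)$, so $T_h^*$ is optimal anyway. Moreover, the first-order condition for $R_{a1}$ is monotone in $\alpha$, since $R_T'(T,\mu_h)>R_T'(T,\mu_l)$. Therefore $T_1^*$ increases in $\alpha$, and once it exceeds $T_l$ it stays above. The set of $\alpha$ where $T_h^*$ is optimal is thus an up-interval $(\alpha_T,1)$, which gives the stated threshold structure.
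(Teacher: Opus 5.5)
Your proposal is correct and follows essentially the paper's proof. It uses the same split on $T_h^*$ versus $T_l$, the same envelope-theorem comparison of slopes in $\alpha$ using $R_T(T_1^*,\mu_l)>0$ on the interior regime, and the same observation that $T_h^*$ is optimal once $T_1^*$ reaches $T_l$; your $R_T'(T,\mu_h)>R_T'(T,\mu_l)$ argument justifies that $T_1^*$ increases in $\alpha$, which the paper only asserts.

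One slip: with $T_1^*$ the unconstrained maximizer, $D(1)=0$ exactly, and $D$ need not be monotone once $T_1^*>T_l$. The sign change should therefore be located, as the paper does and as your last paragraph effectively does, at the $\alpha$ where $T_1^*=T_l$, where $R_{a1}(T_l)=\alpha R_T(T_l,\mu_h)<\alpha R^*(\mu_h)$ provided $T_h^*>T_l$. At $T_h^*=T_l$ no threshold exists at all, since the optimum is $T_1^*<T_l$ for every $\alpha\in(0,1)$; this is why the paper's proof puts the equality case with the first case.
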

\begin{proof}
We compare the maximum on the left and right intervals. 

If $T_h^*\le T_l$, then $\alpha R_{T}(T,\mu_h)$ is decreasing on $[T_l,1]$. Therefore, the optimal threshold is located on $[0,T_l]$. The first-order condition yields the maximizer of $R_T(T,\mu_l)$ is $\frac{\mu_l}{\lambda}\big(1-\frac{1}{\sqrt{1+\lambda v}}\big)$, which is also below $T_l$. Therefore, the weighted average $R_{a1}(T)$ is decreasing on $[T_l,1]$. We find the optimal threshold $T_1^*\in(0,T_l)$ by solving the first-order condition $R_{a1}'(T) = 0$. 

If $T_h^*> T_l$, then we need to compare the maximum value on $[0, T_l]$ with the maximum value on $[T_l, T_h]$. When $T_1^* \ge T_l$, $R_{a1}(T)$ is increasing on $[0,T_l]$ and the optimal threshold is located on the right interval and should be $T_h^*$. We focus on the case when $T_1^*<T_l$.  We examine how the maximum revenues $R_{a1}(T_1^*)$ on the left interval and $\alpha R^*(\mu_h)$ on the right interval change with $\alpha$.  Using the envelope theorem, the derivative of $R_{a1}(T_1^*)$ with respect to $\alpha$ is given by $\frac{d R_{a1}(T_1^*)}{d \alpha} = \frac{\partial R_{a1}}{\partial \alpha}|_{T = T_1^*} = R_T(T_1^*, \mu_h) - R_T(T_1^*, \mu_l) =\frac{\lambda T_1^{*2}(\mu_h-\mu_l)}{(\mu_h-\lambda T_1^*)(\mu_l-\lambda T_1^*)}> 0$.  On the right interval, $\frac{d (\alpha R^*(\mu_h))}{d \alpha} = R^*(\mu_h) = R_T(T_h^*, \mu_h) > R_T(T_1^*, \mu_h)$, since $T_h^*$ maximizes $R_T(T, \mu_h)$.
As long as $T_1^*<T_l$, we have $R_T(T_1^*,\mu_l)>R_T(T_l,\mu_l)=0$, implying $\frac{d (\alpha R^*(\mu_h))}{d \alpha}> \frac{d R_{a1}^*}{d \alpha}$. Thus, as $\alpha$ increases from 0 to 1, $\alpha R^*(\mu_h)$ increases faster than $R_{a1}(T_1^*)$. Meanwhile, $T_1^*$ is increasing with $\alpha$.  When $\alpha=0$, $R_{a1}(T_1^*) = R_T(T_1^*, \mu_l) >\alpha R^*(\mu_h)= 0$; when $\alpha = 1$, $T_1^* = T_h^*>T_l$. There exists a value $\alpha_T$ such that $T_1^*=T_l$ when $\alpha=\alpha_T$. Due to the monotonicity and continuity with respect to $\alpha$, there must exist a threshold $\hat{\alpha} \in (0,\alpha_T)$ such that when $\alpha = \hat{\alpha}$, $R_{a1}(T_1^*) = \alpha R^*(\mu_h)$. Note that $T_h^*$ does not change with $\alpha$. For $\alpha\in[0,\hat{\alpha})$, $R_{a1}(T_1^*)>\alpha R^*(\mu_h)$; for $\alpha\in (\hat{\alpha},\alpha_T)$, $\alpha R^*(\mu_h)>R_{a1}(T_1^*)$; for $\alpha\in[\alpha_T,\mu_h/\lambda]$, $T_1^*\ge T_l$ and the maximum is achieved on the right interval at $T_h^*$.  
\end{proof}

We now compare the profitability of advance and spot selling under uncertain capacity.

\begin{prop}\label{prop:Com2}
When service capacity $\mu$ follows the two-point distribution in (\ref{MuDis}), the optimal advance selling revenue is always less than the optimal spot selling revenue when $\alpha\in(0,1)$.
\end{prop}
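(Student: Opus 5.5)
By Lemma~\ref{lemma:OptSpot2} and the display preceding Lemma~\ref{lemma:OptAdv2}, we have $R_s^*=R^*(\bar\mu)=\bar\mu(\sqrt{1+\lambda v}-1)^2/\lambda$ and $R_a^*=\max\{R_{a1}(T_1^*),\alpha R^*(\mu_h)\}$. It therefore suffices to show separately that $R_{a1}(T_1^*)<R^*(\bar\mu)$ and $\alpha R^*(\mu_h)<R^*(\bar\mu)$ for $\alpha\in(0,1)$. The second inequality is immediate. $R^*(\mu)$ is linear and increasing in $\mu$, so $R^*(\bar\mu)=\alpha R^*(\mu_h)+(1-\alpha)R^*(\mu_l)>\alpha R^*(\mu_h)$ because $R^*(\mu_l)>0$ and $\alpha<1$.

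The substantive step is the first inequality, a Jensen-type argument that mirrors Proposition~\ref{prop:compare1} but runs in the opposite direction. I will work in the cutoff variable $T$. For fixed $T$, $R_{a1}(T)=\lambda T\big(v-T[\alpha W(T,\mu_h)+(1-\alpha)W(T,\mu_l)]\big)$. The key observation is that $W(T,\mu)=1/(\mu-\lambda T)$ is strictly convex in $\mu$ on $\mu>\lambda T$. By Jensen's inequality, $\alpha W(T,\mu_h)+(1-\alpha)W(T,\mu_l)>W(T,\bar\mu)$ for every admissible $T>0$ (note $T\le T_l<\mu_l/\lambda$, so all terms are finite). Hence $R_{a1}(T)<\lambda T(v-TW(T,\bar\mu))=R_T(T,\bar\mu)$ for all $T\in(0,T_l]$.

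It remains to observe that $R_T(T,\bar\mu)$ is the revenue of a deterministic queue with capacity $\bar\mu$, written as a function of the cutoff. Via the bijection $T\leftrightarrow p$ from (\ref{eq:T}), its maximum over $T$ equals the base-model optimum $R^*(\bar\mu)$, exactly as in (\ref{OptT}) with $\mu$ replaced by $\bar\mu$. Thus $R_{a1}(T_1^*)<R_T(T_1^*,\bar\mu)\le R^*(\bar\mu)$. The degenerate case $T_1^*=0$ cannot occur, since $R_{a1}$ is positive for small $T>0$. Combining the two bounds gives $R_a^*<R_s^*$ for all $\alpha\in(0,1)$.

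I expect the main obstacle to be bookkeeping, not analysis. One must make sure the convexity direction is correct, since here the uncertainty enters through $W$, which is convex in $\mu$, rather than through a revenue that is concave in $v$. One must also confirm that the "balking" branch is entirely captured by the term $\alpha R^*(\mu_h)$ after the support extension, so that no corner value at $T_l$ escapes both bounds. That extension was already justified before Lemma~\ref{lemma:OptAdv2}, and both bounds hold pointwise on the extended supports, so the comparison should go through without case analysis on $T_h^*$ versus $T_l$.
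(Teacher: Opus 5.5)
Your proposal is correct and is essentially the paper's argument. The balking branch is handled by linearity of $R^*(\mu)$ in $\mu$, and the no-balking branch by Jensen's inequality for the convex map $\mu\mapsto 1/(\mu-\lambda T)$. The only cosmetic difference is that you compare at a fixed cutoff ($R_{a1}(T)<R_T(T,\bar\mu)\le R^*(\bar\mu)$), whereas the paper fixes the price $p_a$ and shows that spot selling at that price induces a larger cutoff $\tilde T>T_1^*$.

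One correction to your last paragraph: the Jensen bound holds only for $T<\mu_l/\lambda$, not on the whole extended support $[0,\mu_h/\lambda)$. Past $\mu_l/\lambda$ the formula for $R_{a1}$ is not even meaningful. You never need the extension, though, since bounding $R_a$ directly on $[0,T_l]$ and on $(T_l,\mu_h/\lambda)$ already gives $R_a^*<R^*(\bar\mu)$.
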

\begin{proof}
If $R_a^* = \alpha R^*(\mu_h)$, then the threshold is $T_h^*$ and the advance price is $p^*(v)$. By charging the same price, the spot selling obtains a revenue of $R_s(p^*(v)) = \alpha R^*(\mu_h) + (1-\alpha) R^*(\mu_l) > R_a^*$.

If $R_a^* = R_{a1}(T_1^*)$, then the threshold $T_1^*$ satisfies 
\[
v - p_a = \alpha T_1^* W(T_1^*, \mu_h) + (1-\alpha) T_1^* W(T_1^*,\mu_l) = \alpha C(\mu_h)+(1-\alpha) C(\mu_l), 
\]
where $C(\mu):=\frac{T_1^*}{\mu-\lambda T_1^*}$. The second derivative of $C(\mu)$ with respect to $\mu$ is $\frac{d^2 C(\mu)}{d\mu^2} = \frac{2T_1^*}{(\mu-\lambda T_1^*)^3} > 0$ for $\mu > \lambda T_1^*$. Applying Jensen's inequality for the convex function $C(\mu)$, when $\alpha\in(0,1)$, 
\[
v-p_a = \alpha C(\mu_h) + (1-\alpha )C(\mu_l) > C(\alpha \mu_h + (1-\alpha) \mu_l) = C(\bar{\mu}).
\]
When spot selling charges the same $p_a$, solving $v-p_a =\frac{\tilde{T}}{\bar{\mu}-\lambda\Tilde{T}}$, we obtain the corresponding threshold $\tilde{T} = \frac{\bar{\mu}(v-p_a)}{\lambda(v-p_a)+1} = T(p_a, \bar{\mu})$. Since $\frac{x}{\bar{\mu}-\lambda x}$ is increasing in $x$ for $x\in[0,1]$, and $v-p_a > \frac{T_1^*}{\bar{\mu}-\lambda T_1^*}$, we must have $T_1^* < \tilde{T}$. 
Then $R_s(p_a) = \lambda\,p_a \tilde{T} > \lambda\,p_a T_1^* = R_a^*$. 
\end{proof}

\textbf{Remark:}  When capacity is uncertain, the expected waiting time under advance selling varies significantly across different states, as demand remains fixed regardless of the realized capacity. In contrast, spot selling allows customers to adjust their effective demand in response to the realized capacity, thereby reducing the variance of the expected waiting time. On the other hand, when customer valuation is uncertain, the expected waiting time under advance selling is not state-dependent, because demand is locked in advance. However, under spot selling, the expected waiting time fluctuates with the state, as demand adjusts dynamically based on realized valuations. In congestion-sensitive systems under uncertainty, reducing the variance of the expected waiting time appears to be beneficial for revenue maximization.

\section{Robustness Check under Continuous Distributions}
In this section, we study continuous distributions of valuation and service capacity to examine the robustness of the main revenue-dominance insights. The two-point models in Section 2 are useful because they provide explicit high- and low-state comparisons and make the economic mechanisms transparent. At the same time, the two-point discrete distributions with atoms are not special cases of the continuous distributions considered in this section. Thus, the continuous-distribution analysis should be viewed as a robustness check rather than as generalization for the two-point analysis. The results below show that the main insights are not driven solely by the binary-state specification. Throughout this section, we maintain the capacity-scarcity assumption and the uniform waiting-cost distribution on \([0,1]\), while allowing valuation and capacity uncertainty to follow continuous distributions.


\subsection{Uncertain Valuation}

We assume customer valuation follows a general continuous distribution on support $[v_l, v_h]$, where $0\le v_l \le v_h$. The distribution function is $F(v)$. 

\paragraph{Spot Selling.} We follow the notation in Section 2.1. Under a given spot price $p_s$, the expected revenue rate under spot selling is:
\[
R_s(p_s)  = \int_{v_l}^{v_h} [R_p(p_s, v)]^+ d F(v).
\]
Since $R_p(p_s,v)<0$ when $v<p_s$ and $R_p(p_s,v)\ge 0$ when $v\ge p_s$. We have
\[
R_s(p_s) =   \int_{v\ge p_s} R_p(p_s, v) d F(v) =  \lambda \mu\int_{v\ge p_s}\frac{p_s\,(v-p_s)}{1+\lambda\,(v-p_s)}\,dF(v).
\]

\paragraph{Advance Selling.} Suppose the purchase cutoff is $T$, then define $U(T):=TW(T)$. When realized valuation $v\ge U(T)$, no one balks the queue; otherwise, the marginal customer at $T$ would balk the queue. The expected utility of the marginal customer purchasing the advance ticket at price $p_a$ is 
\[
\int_{v\ge U(T)} (v-U(T)) \, d F(v) - p_a = 0. 
\]
The expected revenue rate is 
\[
R_a(T) = \lambda\,p_a\, T =\lambda\, T\int_{v\ge U(T)} (v-U(T))\, d F(v). 
\]
Writing $u:=U(T)=\frac{T}{\mu-\lambda T}\in [0,\infty)$ so that $T=\frac{\mu u}{1+\lambda u}$, the expected revenue is
\[
R_a(u)\;=\;\lambda\,\mu\,\frac{u}{1+\lambda u}\int_{v\ge u}(v-u)\,dF(v).
\]

\begin{prop}
Assume customer valuation follows a continuous distribution $F$ on $[v_\ell,v_h]\subseteq[0,\infty)$. Advance selling weakly dominates spot selling in revenue for any continuous $F$.   That is, 
\[
\max_{u\ge 0}R_a(u)\;\ge\;\max_{p\ge 0}R_s(p).
\]
\end{prop}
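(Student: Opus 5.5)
The plan is a pointwise comparison: for each spot price $p$, I will construct an advance-selling cutoff, parametrized by $u$, whose revenue is at least $R_s(p)$; taking suprema then gives the claim.

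Write $g(x):=\frac{x}{1+\lambda x}$, which is increasing and concave on $[0,\infty)$. The two revenues become
\[
R_s(p)=\lambda\mu\,p\int_{v\ge p} g(v-p)\,dF(v),\qquad R_a(u)=\lambda\mu\, g(u)\int (v-u)^+\,dF(v).
\]
This mirrors the deterministic base model. With a single valuation $v$, the substitution $u=v-p$ turns $p\,g(v-p)$ into $(v-u)g(u)$. So spot selling behaves like an advance scheme whose ``waiting disutility'' $u$ adapts to the realized $v$, while advance selling must fix one $u$ for all states. The key step is to show that averaging the adaptive $u$ loses nothing, by two applications of the averaging argument used in Proposition~\ref{prop:compare1}.

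Fix $p\ge 0$ and let $q:=\Pr_F(v\ge p)$. If $q=0$, then $R_s(p)=0\le R_a(0)$ and there is nothing to prove. Otherwise define the conditional mean surplus $m:=E[v-p\mid v\ge p]\ge 0$.

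First, Jensen's inequality for the concave $g$ gives
\[
R_s(p)=\lambda\mu\,p\,q\,E[g(v-p)\mid v\ge p]\le \lambda\mu\, q\,p\,g(m).
\]

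Second, I evaluate advance selling at $u=m$, i.e.\ at the cutoff $T=\frac{\mu m}{1+\lambda m}$. Since $(v-m)^+\ge (v-m)\mathbf 1\{v\ge p\}$ pointwise,
\[
\int (v-m)^+\,dF\;\ge\; q\,E[v-m\mid v\ge p]\;=\;q\,(p+m-m)\;=\;q\,p .
\]
The middle equality uses the identity $E[v\mid v\ge p]=p+m$. Hence $R_a(m)\ge \lambda\mu\,g(m)\,q\,p\ge R_s(p)$. Taking the supremum over $p$ yields $\max_u R_a(u)\ge \max_p R_s(p)$.

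It remains to check that the maxima exist. Both functions are continuous, vanish at $0$, and vanish or are bounded for large arguments (for $u,p\ge v_h$ both revenues are zero), so the maxima are attained on compact sets. Note that continuity of $F$ is not actually needed in this argument.

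The main obstacle, and the step I would double-check, is the choice of the comparison cutoff $u=m$. It has to serve both inequalities at once: the Jensen step wants $g$ evaluated at the conditional mean of $v-p$, and the linear step needs the advance surplus $E[(v-u)^+]$ to recover exactly $q\,p$. The identity $E[v\mid v\ge p]=p+m$ makes these coincide. I would also verify that the truncation $(\cdot)^+$ in the advance revenue (customers balking in low states) only helps, which is the inequality $(v-m)^+\ge (v-m)\mathbf 1\{v\ge p\}$.
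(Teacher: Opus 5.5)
Your proof is correct, but it differs from the paper's in how the comparison cutoff is chosen and which concavity step is used.

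The paper also argues pointwise in $p$, but it picks $u=y_0$ depending only on $p$. Here $y_0$ solves $y_0(1+\lambda y_0)=p$, i.e.\ $y_0=(\sqrt{1+4\lambda p}-1)/(2\lambda)$; the root displayed in the paper drops the factor $4$. This is exactly the point where the tangent line of $y\mapsto py/(1+\lambda y)$ has slope $g(y_0)$, so the tangent line becomes $g(y_0)(y+p-y_0)$. That gives $R_p(p,v)\le\lambda\mu\,g(y_0)(v-y_0)$ in every realized state $v\ge p$. The proof then enlarges $\{v\ge p\}$ to $\{v\ge y_0\}$ using $p\ge y_0$.

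You instead apply Jensen conditionally on the joining event and compare with $u=m=E[v-p\mid v\ge p]$, which depends on $F$. The identity $E[v\mid v\ge p]=p+m$ then makes the advance price at $u=m$ at least $qp$.

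\emph{What your route buys.} It is the direct analogue of the argument in Proposition~\ref{prop:compare1}: spot revenue at price $p$ is compared with the deterministic system at the conditional mean valuation $p+m$. This shows the two-point and continuous results come from the same mechanism, and it avoids the slope-matching computation.

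\emph{What the paper's route buys.} It gives a distribution-free map $p\mapsto y_0(p)$, and its domination holds in each realized state rather than only on average.

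Your remark that continuity of $F$ is unnecessary holds for both arguments. So the weak inequality of Proposition~\ref{prop:compare1} is covered as well. Your check that the maxima are attained addresses a point the paper leaves implicit.
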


\begin{proof}
We rewrite the spot revenue using a change of variables. 
Let $y = v - p$, so $v = y + p$. Then $v \geq p$ becomes $y \geq 0$, and:
$$R_s(p) = \lambda\,\mu \int_{y \geq 0} \frac{p \, y}{1+\lambda y} \, dF(y + p)$$
Consider the function $g(y) := \frac{p \, y}{1+\lambda y}$ for $y \geq 0$. We seek constants $A, B \geq 0$ such that:
$$A(y + B) \geq \frac{py}{1+\lambda y} \quad \text{for all } y \geq 0$$
The best choice is when this linear function is tangent to the concave function $g(y) = \frac{py}{1+\lambda y}$ at some point $y_0$. The function $g(y)$ has derivative: $g'(y) = \frac{p}{(1+\lambda y)^2}$. 
For the tangent at $y_0$:
\[
A(y + B) = g(y_0) + g'(y_0)(y - y_0)
\]
So: $A = g'(y_0) = \frac{p}{(1+\lambda y_0)^2}$ and $B = g(y_0)/A- y_0=\lambda y_0^2$. 
A natural choice of $y_0$ is to match $A$ with $\frac{u}{1+\lambda u}$ corresponding to the advance selling revenue format.  Setting $u=y_0$ and solving $\frac{p}{(1+\lambda y_0)^2} = \frac{y_0}{1+\lambda y_0}$ gives $p = y_0 (1+\lambda y_0)$. Solving this quadratic function gives the positive root: $y_0 = \frac{\sqrt{1+\lambda p} - 1}{2\lambda}$. 
With this choice, we have 
\[
g(y) \le A(y+B) = A(y + \lambda y_0^2) = A(y + p-y_0) =\frac{y_0}{1+\lambda y_0}(y + p-y_0). 
\]
Therefore, 
\begin{align*}
R_s(p) &= \lambda\,\mu \int_{y \geq 0} \frac{py}{1+\lambda y} \, dF(y+p) \leq \lambda\,\mu \int_{y \geq 0} \frac{y_0}{1+\lambda y_0}(y + p - y_0) \, dF(y+p) \\
&= \lambda\,\mu \int_{v \geq p} \frac{y_0}{1+\lambda y_0}(v - y_0) dF(v)\le \lambda\,\mu \int_{v \geq y_0} \frac{y_0}{1+\lambda y_0}(v - y_0) dF(v)  =R_a(y_0). 
\end{align*}
The second inequality is due to $p = y_0 + \lambda y_0^2\ge y_0$ and the integrand is non-negative. Let $p_s^*$ be the maximizer for $R_s(p)$ and $y_0^*=\frac{\sqrt{1+\lambda p_s^*} - 1}{2\lambda}$. We have 
\[
\max_{u\ge 0}R_a(u) \ge R_a(y_0^*)\ge R_s(p_s^*)=\max_{p\ge 0}R_s(p).
\]
\end{proof}

\subsection{Uncertain Capacity}
Now fix the valuation $v$. Let the service capacity $\mu$ follow a continuous distribution $H$ supported on $[\mu_l,\mu_h]$, where $\lambda\ge\mu_h>\mu_l$. 

\paragraph{Spot Selling.}
The seller sets a spot price $p_s$. As in Section 2.2, when the capacity is observed to be $\mu$,  the purchase cutoff is $T(p_s,\mu)=\frac{\mu(v-p_s)}{\lambda(v-p_s)+1}$. The expected revenue under spot selling is:
\begin{equation*}
R_s(p_s) = \lambda \int p_s T(p_s, \mu)\, dH(\mu) = \lambda\,p_s \frac{(v-p_s)}{\lambda(v-p_s)+1} \int \mu dH(\mu) = \lambda\,R_p(p_s,\bar{\mu}),  
\end{equation*}
where $\bar{\mu}$ is the expectation of the service capacity. We have the following lemma similar to Lemma~\ref{lemma:OptSpot2}. 
\begin{lemma}\label{lemma:OptSpot3}
Assume the service capacity follows a continuous distribution $H$ on $[\mu_l,\mu_h]\subseteq[0,\lambda]$. Under spot selling, the optimal price is $p^*(v)$. The maximal revenue is $R_s^* = R^*(\bar{\mu})$. 
\end{lemma}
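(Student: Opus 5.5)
The plan is to reduce Lemma~\ref{lemma:OptSpot3} to the deterministic base model, just as Lemma~\ref{lemma:OptSpot2} was obtained in the two-point case. First, I will confirm that for every price $p_s<v$ and every realized capacity $\mu\in[\mu_l,\mu_h]$ the spot cutoff $T(p_s,\mu)=\frac{\mu(v-p_s)}{\lambda(v-p_s)+1}$ is well defined and interior. This holds because $\mu\le\mu_h\le\lambda$ gives $T(p_s,\mu)<\mu/\lambda\le 1$, so $\mu>\lambda T$ and the sojourn time is finite. For $p_s\ge v$ no customer joins and revenue is zero, so those prices can be discarded.

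Next, I will use that $T(p_s,\mu)$ is linear in $\mu$ and interchange the integral with the factors that do not depend on $\mu$. This gives
\[
R_s(p_s)=\lambda p_s\frac{v-p_s}{\lambda(v-p_s)+1}\int\mu\,dH(\mu)=\lambda p_s T(p_s,\bar\mu),
\]
which is exactly the base-model revenue (\ref{eq:RevD}) with $\mu$ replaced by the constant $\bar\mu$. Here $\bar\mu\in[\mu_l,\mu_h]\subseteq(0,\lambda]$, so the capacity-scarcity assumption still holds for the reduced problem. (The paper's display writes $\lambda R_p(p_s,\bar\mu)$; I will read it as $R_p(p_s,\bar\mu)$ with capacity $\bar\mu$, since $R_p$ already contains the factor $\lambda$.)

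The factor $\bar\mu$ only scales the objective and does not affect the maximizer. I will therefore invoke the base-model analysis: $R_p$ is concave in $p$ on $[0,v)$, and the first-order condition gives $p^*(v)$ from (\ref{OptP}), which depends only on $\lambda$ and $v$, not on $\mu$. Substituting back gives $R_s^*=\bar\mu(\sqrt{1+\lambda v}-1)^2/\lambda=R^*(\bar\mu)$.

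There is no real obstacle. The only points needing care are the interchange of the integral, which is harmless because the integrand is bounded, and checking that the interiority condition holds uniformly over the support of $H$ so that the same formula for $T$ applies in every state. Both follow from $\mu_h\le\lambda$.
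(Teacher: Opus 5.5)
Your proposal is correct and follows essentially the paper's own argument: linearity of $T(p_s,\mu)$ in $\mu$ reduces the spot revenue to the base-model revenue with capacity $\bar\mu$, and the base-model price $p^*(v)$ and revenue $R^*(\bar\mu)$ then carry over directly. You are also right that the paper's $\lambda R_p(p_s,\bar\mu)$ should read $R_p(p_s,\bar\mu)$, since $R_p$ already contains the factor $\lambda$.
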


\paragraph{Advance Selling.} 
Suppose the purchase cutoff is $T$. Following Section 2.2, we have defined $C(\mu)=\frac{T}{\mu-\lambda T}$. When realized capacity satisfies $C(\mu)\le v$, or $\mu\ge \frac{T(1+\lambda v)}{v}$, no one balks the queue; otherwise, the marginal customer at $T$ would balk the queue.  The expected utility of the marginal customer purchasing the advance ticket at price $p_a$ is 
\[
\int_{\mu\ge \frac{T(1+\lambda v)}{v}} (v- C(\mu)) \, d H(\mu) - p_a = 0. 
\]
The expected revenue rate is 
\[
R_a(T) = \lambda\,p_a\, T = \lambda\,T\int_{\mu\ge \frac{T(1+\lambda v)}{v}} (v-C(\mu))\, d H(\mu). 
\]
Suppose $T^*$ is the maximizer of $R_a(T)$, and the corresponding optimal advance price is $p_a^*$. Since $C(\mu)$ is convex, applying Jensen's inequality we have 
\[
p_a^* = \int_{\mu\ge \frac{T^*(1+\lambda v)}{v}} (v- C(\mu)) \, d H(\mu)
\le v - C\bigg(\int_{\mu\ge \frac{T^*(1+\lambda v)}{v}} \mu \, dH(\mu) \bigg) \le v - C(\bar{\mu}).  
\]
The second inequality is due to $C(\mu)$ is decreasing and $\int_{\mu\ge \frac{T^*(1+\lambda v)}{v}} \mu \, dH(\mu)\le \bar{\mu}$. 
Suppose the spot selling also adopt price $p_a^*$, then the cutoff $\tilde{T}$ satisfies $v-p_a^* - \frac{\tilde{T}}{\bar{\mu}-\lambda \tilde{T}} = 0$. We have
\[
\frac{\tilde{T}}{\bar{\mu}-\lambda \tilde{T}} \ge \frac{T^*}{\bar{\mu}-\lambda T^*} \Longleftrightarrow \tilde{T} \ge T^*. 
\]
Therefore, 
\[
\max_{T\in\left[0,\frac{\mu_h}{\lambda}\right)}R_a(T) = \lambda\, p_a^* \,T^* \le \lambda\,p_a^* \tilde{T} \le  \max_{p\ge 0}R_s(p) = R^*(\bar{\mu}).
\]
We have the following dominance result. 
\begin{prop}
Assume service capacity follows a continuous distribution $H$ on $[\mu_l,\mu_h]\subseteq[0,\lambda]$. Spot selling weakly dominates advance selling in revenue for any continuous $H$.   That is, 
\[
\max_{T\in\left[0,\frac{\mu_h}{\lambda}\right)}R_a(T)\;\le\;\max_{p\ge 0}R_s(p).
\]
\end{prop}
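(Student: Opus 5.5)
The plan is to compare the two regimes at a common price. Let $T^*$ be the maximizing advance-selling cutoff and $p_a^*$ the corresponding advance price. I will show that a spot seller charging the same price $p_a^*$ attracts a weakly larger cutoff $\tilde T\ge T^*$. Then
\[
\max_{T\in\left[0,\frac{\mu_h}{\lambda}\right)}R_a(T)=\lambda p_a^*T^*\le\lambda p_a^*\tilde T=R_s(p_a^*)\le\max_{p\ge 0}R_s(p)=R^*(\bar\mu).
\]
The last equality is Lemma~\ref{lemma:OptSpot3}. By linearity of $T(p,\mu)$ in $\mu$, the spot cutoff at price $p$ is $T(p,\bar\mu)$, which solves $v-p=C_{\tilde T}(\bar\mu)$ with $C_T(\mu)=T/(\mu-\lambda T)$.

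The first step is to bound the advance price. Write $\mu_0:=T^*(1+\lambda v)/v$ for the balking threshold. The marginal advance buyer's indifference condition gives
\[
p_a^*=\int_{\mu\ge\mu_0}\bigl(v-C_{T^*}(\mu)\bigr)\,dH(\mu).
\]
I would avoid applying Jensen to a truncated (sub-probability) measure, which is delicate. Instead I would write the bound directly as
\[
p_a^*=\int \bigl[v-C_{T^*}(\mu)\bigr]^+\,dH(\mu),
\]
with the convention that $v-C_{T^*}(\mu)$ is $-\infty$ when $\mu\le\lambda T^*$. The map $\mu\mapsto v-C_{T^*}(\mu)$ is concave and increasing on $(\lambda T^*,\infty)$ and equals $0$ at $\mu_0$. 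Its positive part $\phi(\mu):=[v-C_{T^*}(\mu)]^+$ is therefore a maximum of $0$ and a concave function, and so is not itself concave. Consequently this route needs a different bound than plain Jensen.

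The key inequality to establish is $p_a^*\le v-C_{T^*}(\bar\mu)$, at least when $\bar\mu>\mu_0$. If instead $\bar\mu\le\mu_0$, I would argue that $T^*$ cannot be optimal, or treat that case separately, as explained below.

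I expect this bound, $\int\phi\,dH\le v-C_{T^*}(\bar\mu)$, to be the main obstacle. Its positive part behaves like a call option, so a mean-preserving spread could in principle raise $\int\phi$. The tool I would try first is a tangent-line majorant. Let $\ell(\mu)$ be the tangent to the concave function $v-C_{T^*}$ at $\bar\mu$. On $[\mu_0,\infty)$, concavity gives $\phi\le\ell$. On $[\mu_l,\mu_0)$ we have $\phi=0$, so I need $\ell\ge 0$ there. That holds whenever $\ell(\mu_l)\ge0$, which fails in general.

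If it fails, I would fall back on optimality of $T^*$ instead of a pointwise bound. Because $H$ is continuous, $R_a$ is differentiable. I would compare $R_a(T^*)$ directly with $\lambda T\,(v-C_T(\bar\mu))$ evaluated at the spot-optimal cutoff $T=T(p^*(v),\bar\mu)$, whose value is exactly $R^*(\bar\mu)$. The plan there is to show that $\lambda T\,\mathbb{E}\bigl[(v-C_T(\mu))^+\bigr]\le R^*(\bar\mu)$ for all $T$, by maximizing over $T$ inside the expectation:
\[
\lambda T\,(v-C_T(\mu))^+\le\max_{T'}\lambda T'\,(v-C_{T'}(\mu))=R^*(\mu),
\]
which is linear in $\mu$. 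Taking expectations gives $R_a(T)\le\mathbb{E}R^*(\mu)=R^*(\bar\mu)$. This last argument is clean and needs no Jensen at all, so I would actually lead with it as the main proof. The spot cutoff comparison above would then serve only as intuition.
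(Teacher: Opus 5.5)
Your final argument is correct, and it is not the paper's route. The paper uses the approach you sketch first. With $T^*$ and $p_a^*$ the advance optimum, it applies Jensen to the convex map $\mu\mapsto C(\mu)$ to get $p_a^*\le v-C(\bar\mu)$. It then concludes that a spot seller charging $p_a^*$ obtains a cutoff $\tilde T\ge T^*$ and so earns at least $\lambda p_a^*T^*$.

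You instead bound the advance revenue state by state. For every $T$ and $\mu$, $\lambda T\,[v-C_T(\mu)]^+\le\max\{R_T(T,\mu),0\}\le R^*(\mu)$. The term vanishes when $\mu\le\mu_0(T)=T(1+\lambda v)/v$; otherwise $T<\mu/\lambda$ lies in the range over which $R^*(\mu)$ is the maximum. Since $R^*(\mu)=\mu(\sqrt{1+\lambda v}-1)^2/\lambda$ is linear in $\mu$, integrating against $H$ gives $R_a(T)\le R^*(\bar\mu)$, and spot selling attains this at $p^*(v)$.

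Your route has several advantages:
\begin{itemize}
\item It needs no maximizer $T^*$, no continuity or differentiability of $H$ or $R_a$, and no Jensen.
\item The same two lines cover any distribution of $\mu$, including the two-point one of Section~2.2 (in weak form).
\item It exposes the actual mechanism. Because $p^*(v)$ does not depend on $\mu$, a single spot price already achieves the state-contingent benchmark $\mathbb{E}[R^*(\mu)]$, which no advance scheme can exceed.
\end{itemize}
The paper's same-price comparison mainly offers the congestion intuition: locking in demand is penalized because waiting cost is convex in capacity. You can drop your tangent-line and differentiability discussion entirely.

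Your distrust of the truncated Jensen step is justified. The paper's inequality $\int_{\mu\ge\mu_0}(v-C(\mu))\,dH\le v-C\bigl(\int_{\mu\ge\mu_0}\mu\,dH\bigr)$ is not Jensen for a sub-probability measure, and the conclusion $p_a^*\le v-C(\bar\mu)$ can fail. Here is a counterexample:
\begin{itemize}
\item Take $\lambda=2$, $v=4$, and a continuous $H$ with mass about $0.7$ near $\mu_h=1$ and the rest near $0$.
\item The advance optimum essentially serves only the high state: $T^*\approx 1/3$, $p_a^*\approx 0.7\,p^*(v)=2.1$, and $R_a^*\approx 1.4$.
\item But $\bar\mu\approx 0.7<\mu_0(T^*)=0.75$, so $v-C(\bar\mu)<0<p_a^*$.
\item At price $2.1$ the spot cutoff is $\tilde T\approx 0.28<T^*$, giving spot revenue of only about $1.16<R_a^*$.
\end{itemize}
So the paper's comparison at a common price does not go through in general. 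Your bound does: here $R_a^*\le R^*(\bar\mu)\approx 1.4$, nearly tight. Your pointwise argument should be the proof.
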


\subsection{When Both Uncertainty Sources Exist}

We now consider the case in which both sources of uncertainty are present. Customer valuation follows a continuous distribution \(F\) on \([v_l,v_h]\), and service capacity follows a continuous distribution \(H\) on \([\mu_l,\mu_h]\). The two sources of uncertainty are assumed to be independent.

\paragraph{Spot Selling.} Similar to Section 3.1, under a given spot price $p_s$, the expected revenue rate is 
\begin{align*}
    R_s(p_s) &=  \int_{\mu} \int_{v\ge p_s} R_p(p_s,v) \,dF(v)\, dH(\mu) 
 =\lambda\, \int_{\mu}  \mu\int_{v\ge p_s}\frac{p_s\,(v-p_s)}{1+\lambda\,(v-p_s)}\,dF(v)\,dH(\mu) \\
 &= \lambda\,p_s\,\bar{\mu} \int_{v\ge p_s}\frac{(v-p_s)}{1+\lambda\,(v-p_s)}\,dF(v). 
\end{align*}
Note that $\bar{\mu}$ becomes a coefficient of the objective and the maximizer $p_s^*$ only depends on valuation distribution and does not depend on the capacity distribution. 

\paragraph{Advance Selling.} Similar to Section 3.1, under a given threshold $T$, the advance price and the expected revenue rate are 
\[
p_a(T) = \int_{\mu>\lambda T} \int_{v\ge \frac{T}{\mu-\lambda T}} \bigg(v- \frac{T}{\mu-\lambda T}\bigg) \, d F(v)\,dH(\mu), \quad R_a(T) =\lambda\, T p_a(T). 
\]

\paragraph{Numerical Design.}
We set
\[
\lambda=2,
\qquad
v\sim U[\bar v-d_v,\bar v+d_v],
\qquad
\bar v=1,
\]
and
\[
\mu\sim U[\bar\mu-d_\mu,\bar\mu+d_\mu],
\qquad
\bar\mu=1.
\]
We vary
$d_v\in\{0.05,0.15,0.25,0.35,0.45\}$,  and $d_\mu\in\{0.05,0.15,0.25,0.35,0.45\}$. 
For all parameter combinations, \(v_l>0\) and $\mu_h=1+d_\mu\le 1.45<2=\lambda$. 

\paragraph{Example: $d_v=0.45,d_{\mu}=0.45$.}
Under spot selling,
\[
R_s(p_s)
=
2p_s\cdot \frac{1}{0.9}
\int_{\max\{p_s,0.55\}}^{1.45}
\frac{v-p_s}{1+2(v-p_s)}\,dv.
\]
Numerical maximization gives
\[
p_s^*\approx 0.5779,
\qquad
R_s(p_s^*)\approx 0.23588.
\]

Under advance selling,
\[
R_a(T)
=
2T\cdot \frac{1}{0.9}\cdot\frac{1}{0.9}
\int_{0.55}^{1.45}
\int_{0.55}^{1.45}
\left(
v-\frac{T}{\mu-2T}
\right)^+
\mathbf 1_{\{\mu>2T\}}\,dv
\,d\mu .
\]
Numerical maximization over \(T\in[0,1.45/2)\) gives
\[
T_a^*\approx 0.1849,
\qquad
R_a(T_a^*)\approx 0.23566.
\]

Therefore, the revenue difference  $\Delta R:=R_a(T_a^*)-R_s(p_s^*)
\approx
0.23566-0.23588=
-0.00023$. In this example, spot selling slightly outperforms advance selling.

For each pair \((d_v,d_\mu)\), we numerically optimize \(R_s(p_s)\) over \(p_s\in[0,v_h]\) and \(R_a(T)\) over \(T\in[0,\mu_h/\lambda)\). The numerical integration is performed by Gaussian quadrature, and the one-dimensional optimization problems are solved by bounded scalar optimization. Table~\ref{tab:dispersion_revenue_lambda_large_vbar1} reports the revenue difference $\Delta R$. 
A positive value means that advance selling generates higher revenue, whereas a negative value (colored in red) means that spot selling generates higher revenue.

\begin{table}[ht]
\centering
\caption{Revenue Difference \(\Delta R\) under Both Valuation and Capacity Uncertainty}
\label{tab:dispersion_revenue_lambda_large_vbar1}
\begin{tabular}{c|ccccc}
\hline
 & \(d_\mu=0.05\) & \(d_\mu=0.15\) & \(d_\mu=0.25\) & \(d_\mu=0.35\) & \(d_\mu=0.45\) \\
\hline
\(d_v=0.05\) &  0.00002 & {\color{red}-0.00310} & {\color{red}-0.00947} & {\color{red}-0.01938} & {\color{red}-0.03331} \\
\(d_v=0.15\) &  0.00327 &  0.00015 & {\color{red} -0.00622} & {\color{red}-0.01613} & {\color{red} -0.03003} \\
\(d_v=0.25\) &  0.00972 &  0.00660 &  0.00023 & {\color{red}-0.00967} & {\color{red}-0.02337} \\
\(d_v=0.35\) &  0.01931 &  0.01619 &  0.00982 & {\color{red}-0.00006} & {\color{red} -0.01334} \\
\(d_v=0.45\) &  0.03168 &  0.02856 &  0.02219 &  0.01254 & {\color{red}-0.00023} \\
\hline
\end{tabular}
\end{table}

Table~\ref{tab:dispersion_revenue_lambda_large_vbar1} shows a clear pattern. Holding capacity dispersion fixed, \(\Delta R\) increases as valuation dispersion \(d_v\) increases. This is consistent with the single-source result that valuation uncertainty favors advance selling. Holding valuation dispersion fixed, \(\Delta R\) decreases as capacity dispersion \(d_\mu\) increases, consistent with the single-source result that capacity uncertainty favors spot selling.

The table also shows that the revenue ranking can switch depending on the relative dispersion of the two uncertainty sources. When valuation dispersion is small and capacity dispersion is large, spot selling is more profitable. In contrast, when valuation dispersion is large and capacity dispersion is relatively small, advance selling is more profitable. 
The boundary between the two regimes shifts with the relative magnitude of the two dispersions. These numerical results suggest that the two single-source forces continue to operate when both sources of uncertainty coexist: valuation dispersion favors advance selling, whereas capacity dispersion favors spot selling. 

\subsection{Numerical Comparison for Social Welfare and Consumer Surplus}
Now we consider social welfare and consumer surplus under the two pricing strategies in the dual-uncertainty setting of Section 3.3. For a realized state \((v,\mu)\) in the unobservable \(M/M/1\) queue, the joining cutoff is $T(v,\mu)=\frac{\mu(v-p_s)}{\lambda(v-p_s)+1}$ under spot selling with price \(p_s\), and $T(v,\mu)=\min\left\{T_a,\frac{\mu v}{\lambda v+1}\right\}$ 
under advance selling with purchase cutoff \(T_a\). Note that under advance selling, if \(T_a>T(v,\mu)\), then customers whose waiting-cost types lie in \((T(v,\mu),T_a]\) will balk after the uncertainty is resolved.
The steady-state waiting time is $W(T,\mu)=\frac{1}{\mu-\lambda T}$. The corresponding social welfare rate (total valuation of customers minus their aggregate waiting cost per unit time) is
\begin{align*}
{\cal W}(v,\mu)
\;&=\;\lambda\!\left[\,v\,T(v,\mu)-\int_0^{T(v,\mu)} c\,W(T(v,\mu),\mu)dc\,\right]
\;=\;\lambda\!\left[\,v\,T(v,\mu)-\frac{T(v,\mu)^2}{2(\mu-\lambda T(v,\mu))}\,\right].
\end{align*}
Integrating over $v$ and $\mu$ gives social welfare rate
\[
SW = \int_\mu\int_{v} {\cal W}(v,\mu) d F(v) dH(\mu). 
\]
The consumer surplus rate is
\[
CS = SW - R, 
\]
where the revenue $R =R_s(p_s)$ under spot selling, and $R=R_a(T_a)$ under advance selling. 

We use the same numerical setting as in Section~3.3:
\[
\lambda=2,
\qquad
v\sim U[1-d_v,1+d_v],
\qquad
\mu\sim U[1-d_\mu,1+d_\mu].
\]
Table~\ref{tab:welfare_surplus_lambda_large_vbar1} reports four representative cases,  combining low or high valuation dispersion with low or high capacity dispersion.

\begin{table}[ht]
\centering
\caption{Revenue, Social Welfare, and Consumer Surplus under Dispersion Combinations}
\label{tab:welfare_surplus_lambda_large_vbar1}
\begin{tabular}{cc|cc|cc|cc}
\hline
\(d_v\) & \(d_\mu\)
& \(R_s^*\) & \(R_a^*\)
& \(SW_s\) & \(SW_a\)
& \(CS_s\) & \(CS_a\) \\
\hline
0.05 & 0.05 & 0.26754 & 0.26756 & 0.34535 & 0.34474 & 0.07781 & 0.07717 \\
0.05 & 0.45 & 0.26754 & 0.23423 & 0.34535 & 0.29632 & 0.07781 & 0.06209 \\
0.45 & 0.05 & 0.23588 & 0.26756 & 0.34509 & 0.34474 & 0.10921 & 0.07717 \\
0.45 & 0.45 & 0.23588 & 0.23566 & 0.34509 & 0.30244 & 0.10921 & 0.06678 \\
\hline
\end{tabular}
\end{table}

Several observations emerge from Table~\ref{tab:welfare_surplus_lambda_large_vbar1}. First, advance selling extracts more surplus from customers through ex-ante commitment, whereas spot selling leaves more surplus to customers. Second, when valuation dispersion is low and capacity dispersion is high, spot selling dominates advance selling in revenue, welfare, and consumer surplus. 
Third, when valuation dispersion is high and capacity dispersion is low, spot selling generates lower revenue but may generate higher social welfare than advance selling. These numerical results reinforce that the revenue-maximizing selling regime need not coincide with the welfare-maximizing or consumer-surplus-maximizing regime.

\section{Conclusion}
In this paper, we have developed and rigorously analyzed a queueing-based model comparing advance selling and spot selling strategies under two distinct sources of uncertainty: uncertain customer valuation from the demand side, uncertain service capacity from the supply side.  Our analysis yields nuanced insights into optimal pricing strategies in congested service systems.

Under conditions of uncertain customer valuation---a scenario often driven by exogenous factors such as weather conditions that affect service appeal---our findings indicate that advance selling enables the service provider to effectively ``lock in'' commitments from customers, particularly when the customer density is high enough, or the probability of realizing a high valuation state is relatively low. In such cases, advance selling becomes the more profitable strategy. Conversely, when uncertainty originates from fluctuations in service capacity, as commonly observed in gig economies and other dynamic service contexts, our analysis demonstrates that the flexibility to allow customers to purchase after the uncertainty is resolved---i.e., spot selling---consistently leads to higher revenue.  This is attributed to the ability to adapt demand to the realized state of capacity, mitigating the risk of long waiting times compared with locking in demand in advance. 

Our results highlight the importance for service providers to diagnose and understand the primary source of uncertainty in their operating environment. The optimal choice between advance and spot selling is not universal but rather fundamentally contingent on whether the dominant uncertainty lies with customer valuations or with supply-side capacity. These insights offer actionable guidance for service managers in designing and implementing effective pricing strategies in the face of real-world uncertainties inherent in service operations. 

Several modeling assumptions delimit the scope of our results. First, the uniform waiting-cost distribution is a normalized benchmark that allows closed-form analysis; other distributions may be more suitable in applications with heavy-tailed delay sensitivity. Second, we assume that waiting costs are independent of valuation and capacity states. Individual-level correlation between valuation and waiting cost may affect the composition of participating customers and would be an important extension. Third, the stochastic service rate in the M/M/1 model should be interpreted as a reduced-form representation of effective capacity uncertainty rather than a literal model of all multi-server capacity fluctuations.

Further research could extend this model to more complex queueing systems, such as multi-server queues or priority queues, to explore how these structures influence the effectiveness of advance and spot selling. Additionally, it would be interesting to investigate two-stage pricing strategies, where the seller sets both an advance price and a spot price, allowing for more dynamic revenue optimization. Finally, a more realistic extension could consider scenarios where customers arrive in two stages, enabling a richer analysis of demand dynamics and pricing strategies.

\bibliographystyle{plainnat}
\bibliography{ref}

\end{document}